\documentclass[14pt]{article}
\usepackage[left=1in,top=1in,right=1in,bottom=1in,letterpaper]{geometry}

\usepackage{amsmath,amssymb,amsthm}
\usepackage{latexsym,amsfonts,amscd,amsxtra,amstext}

\newcommand{\RR}{\mathbb{R}}
\newcommand{\dom}{{\mathrm{dom}}} 
\newcommand{\prox}{{\mathbf{prox}}}

\DeclareMathOperator*{\Min}{minimize}

\newcommand{\st}{\mbox{subject to}}

\newtheorem{theorem}{Theorem}
\newtheorem{definition}{Definition}
\newtheorem{corollary}{Corollary}
\newtheorem{lemma}{Lemma}

\begin{document}

\title{Proximal linearized iteratively reweighted least squares for a class of nonconvex and nonsmooth problems}

\author{
Hui Zhang\thanks{Department of Mathematics and Systems Science,
College of Science, National University of Defense Technology,
Changsha, Hunan, China, 410073. Email: \texttt{hhuuii.zhang@gmail.com}}
\and Tao Sun \thanks{Department of Mathematics and Systems Science,
College of Science, National University of Defense Technology,
Changsha, Hunan, China, 410073. Email: \texttt{nudtsuntao@163.com}}
\and Lizhi Cheng \thanks{The state key laboratory for high performance computation, and Department of Mathematics and Systems Science,
 National University of Defense Technology,
Changsha, Hunan, China, 410073. Email: \texttt{clzcheng@nudt.edu.cn}}}
\date{\today}

\maketitle

\begin{abstract}
For solving a wide class of nonconvex and nonsmooth problems, we propose a proximal linearized iteratively reweighted least squares (PL-IRLS) algorithm. We first approximate the original problem by smoothing methods, and second write the approximated problem into an auxiliary problem by introducing new variables. PL-IRLS is then built on solving the auxiliary problem by utilizing the proximal linearization technique and the iteratively reweighted least squares (IRLS) method, and has remarkable computation advantages. We show that PL-IRLS can be extended to solve more general nonconvex and nonsmooth problems via adjusting generalized parameters, and also to solve nonconvex and nonsmooth problems with two or more blocks of variables. Theoretically, with the help of the  Kurdyka-{\L}ojasiewicz property, we prove that each bounded sequence generated by PL-IRLS globally converges to a critical point of the approximated problem.  To the best of our knowledge, this is the first global convergence result of applying IRLS idea to solve nonconvex and nonsmooth problems. At last, we apply PL-IRLS to solve three representative nonconvex and nonsmooth problems in sparse signal recovery and low-rank matrix recovery and obtain new globally convergent algorithms.
\end{abstract}

\textbf{Keywords:} proximal map, linearization, nonconvex-nonsmooth, iteratively reweighted least squares, Kurdyka-{\L}ojasiewicz property, global convergence, alternating minimization
\section{Introduction}
In this paper, we consider a broad class of nonconvex and nonsmooth problems with the following form:
\begin{equation*}
(M)\quad \quad \Min F(x):=f(x)+s(x)+\sum_{i=1}^m\|B_ix-c_i\|_2,
\end{equation*}
where $B_i\in \RR^{k_i\times n}, c_i\in\RR^{k_i}, i=1, 2, \cdots, m,$ and the functions $f(x)$ and $s(x)$ satisfy the following properties:

(A) The function $f(x)$ is extended valued (i.e., allowing the inclusion of constraints) and the proximal map of $f(x)$, i.e., the quantity
\begin{equation}
\prox_{c}^f(y):=\arg\min_x\{f(x)+\frac{c}{2}\|x-y\|_2^2\}
\end{equation}
is easy to compute for any given $y\in \RR^n$ and $c>0$. Note that even when $f$ is nonconvex, $\prox_{c}^f(y)$ is also well-defined \cite{bolte2013proximal}. When $f(x)$ is the indicator function $\delta(x, Z)$ defined by
\begin{eqnarray}
\delta(x, Z)=\left\{\begin{array}{ll}
0 &\textrm{if} ~x\in Z,\\
+\infty&\textrm{otherwise},
\end{array} \right.
\end{eqnarray}
the proximal map reduces to the projection operator onto $Z$, defined by
\begin{equation}
P_X(y):=\arg\min \{\|x-y\|_2: x\in Z\}.
\end{equation}

(B) The function $s(x)$ is a differentiable function with a Lipschitz continuous gradient whose Lipschitz continuity modulus is bounded by $L_s$; that is
\begin{equation}
\|\nabla s(u)-\nabla s(v)\|_2\leq L_s\|u-v\|_2,~\textrm{ for all}~~u,v\in\RR^n.
\end{equation}

Throughout the paper, we highlight that no convexity will be assumed in the objective or/and the constraints. In other words, the functions of $f(x)$ and $s(x)$ can be convex and nonconvex. Problem (M) appeared in various applications such as image processing, compressed sensing, low-rank matrix recovery, machine learning, statistics, and more. In many applications, $s(x)$ is usually the differentiable loss function. Both of $f(x)$ and $\sum_{i=1}^m\|B_ix-c_i\|_2$ can be regularization functions modeling different priors known about the desired solution. The former is a directed regularization and the latter needs affine maps. In what follows, we describe a couple of application examples of problem (M).

\textbf{Example 1.} (Sparsity constrained $\ell_1$-norm linear regression) In this application, two types of problems \cite{wright2010dense,cai2013rop,chen2013exact} need to be solved
\begin{itemize}
  \item nonconvex case:
  \begin{subequations}\begin{equation}\label{spar1}
\quad \quad \Min \lambda\|x\|_0 +\|Ax-b\|_1
\end{equation}
or \begin{equation}\label{spar2}
\quad \quad \Min \|Ax-b\|_1, \quad  \st \quad \|x\|_0\leq k,
\end{equation}
\end{subequations}

  \item convex case:  \begin{subequations}
  \begin{equation}\label{spar3}
\quad \quad \Min \lambda\|x\|_1 +\|Ax-b\|_1
\end{equation}
or
\begin{equation}\label{spar4}
\quad \quad \Min \|Ax-b\|_1, \quad  \st \quad \|x\|_1\leq r,
\end{equation}
\end{subequations}
\end{itemize}
where $A\in\RR^{m\times n}, b\in \RR^m$, $\lambda, k, r$ are positive parameters, $\|x\|_1=\sum_{i=1}^n
|x_i|$, and $\|x\|_0$ equals to the number of nonzero entries in $x$.
The problem \eqref{spar1} can be written into the form of (M) with $f(x)=\lambda\|x\|_0$, $s(x)\equiv 0, B_i=e_i^TA, c_i=b_i$, and so is the problem \eqref{spar2} with $f(x)=\delta(x, \Sigma_k)$, $s(x)\equiv 0, B_i=e_i^TA, c_i=b_i$,  where $e_i$ denotes the vector whose $i$th component is 1 and other components are 0, and $\Sigma_k=\{x\in\RR^n: \|x\|_0\leq k\}$. Similarly, the problems \eqref{spar3} and \eqref{spar4} can be also written into the form of (M); we omit the details.

In this group of models, $\|x\|_0$ is used to produce sparse solution, the function $\|Ax-b\|_1$ reflects that the observed data is contaminated by sparse (or say bounded/impulse) noise. In convex case, $\|x\|_1$ is used as a convex relaxation of $\|x\|_0$ for two purposes: not only turning a nonconvex problem into a convex one, but also producing sparse solution. These optimization problems are ubiquitous in compressive sensing community \cite{candes2006stable,donoho2006compressed}.

\textbf{Example 2.} (Cosparse least square problem) In this application, one needs to solve
\begin{equation}\label{cospar}
\Min_{x\in \Omega} \frac{\lambda}{2}\|\Phi x-b\|_2^2+\|\Psi x\|_1,
\end{equation}
 where $\Phi\in\RR^{m_1\times n}, b\in\RR^{m_1}, \Psi\in\RR^{m\times n}$ and $\lambda>0$, $\Omega\subset\RR^n$ is a closed set. Here, $\Psi$ is the so-called analyzing operator in cosparse models \cite{nam2013cosparse}. The problem \eqref{cospar} has the form of (M) with $f(x)=\delta(x,\Omega), s(x)=\frac{\lambda}{2}\|\Phi x-b\|_2^2, B_i=e^T_i\Psi, c_i=0$.

 In \eqref{cospar}, the function $\|\Psi x\|_1$ is used to promote sparsity and can be understood in such a way that the objective/solution is sparse after a transformation. Commonly encountered transformations includes wavelet operator, total variation operator, and redundant frame operator.
 This problem arises from many applications such as the total variation model in image processing \cite{rudin1992nonlinear}, cosparse signal recovery \cite{nam2013cosparse} in compressive sensing and so on.

 \textbf{Example 3.} (Robust principle component analysis, RPCA \cite{candes2011robust}) The purpose of RPCA is decompose an observed matrix $D$ into a sum of a low-rank component and sparse component. Therefore, one may be interested in the following problems
 \begin{itemize}
  \item nonconvex case:
  \begin{subequations}
  \begin{equation}\label{rpca1}
\Min \lambda \cdot\textrm{rank}(X)+\|D-X\|_1
\end{equation}
or   \begin{equation}\label{rpca2}
\Min \|D-X\|_1, \quad  \st \quad \textrm{rank}(X) \leq k,
\end{equation}
\end{subequations}

  \item convex case:   \begin{subequations}
  \begin{equation}\label{rpca3}
\Min \lambda \cdot\|X\|_*+\|D-X\|_1
\end{equation}
or   \begin{equation}\label{rpca4}
\Min \|D-X\|_1, \quad  \st \quad \|X\|_* \leq r,
\end{equation}
\end{subequations}
\end{itemize}
where $D$ the observed matrix, $\textrm{rank}(X)$ is the rank of matrix $X$, $\|X\|_*$ represents the nuclear norm of matrix $X$ and equals to the sum of all singular values of $X$.
All these problems \eqref{rpca1}-\eqref{rpca4} can be viewed as special cases of the general problem (M). For example, problem \eqref{rpca1} has the form of (M) with $f(x)= \lambda \cdot\textrm{rank}(X), s(x)\equiv 0, m=1, B_1=\mathcal{I}, c_1=D$, where $\mathcal{I}$ denotes the identity operator.

 Now, let us return to problem (M). From properties (A) and (B), we know that $f(x)$ is simple in the sense that its proximal map is easy to be computed and $s(x)$ is gradient-Lipschtiz-continuous. The main difficulty in solving problem (M) comes from the last term $\sum_{i=1}^m\|B_ix-c_i\|_2$ which is not smooth. To get around this difficulty, it is natural to smooth this term and solve a smoothed approximation of (M) like
 \begin{equation*}
(M_\epsilon)\quad \quad \Min F_\epsilon(x):=f(x)+s(x)+\sum_{i=1}^m\sqrt{\|B_ix-c_i\|_2^2+\epsilon^2},
\end{equation*}
which was suggested in \cite{beck2013convergence}; other types of smoothing methods can be found in \cite{beck2012smoothing}. When $f(x)=\delta(x,X)$ with $X$ being a closed and convex subset of $\RR^n$, problem ($M_\epsilon$) becomes
 \begin{equation}\label{Beck}
\Min s(x)+\sum_{i=1}^m\sqrt{\|B_ix-c_i\|_2^2+\epsilon^2} \quad \st \quad x\in X,
\end{equation}
which is exactly the problem studied in \cite{beck2013convergence} where the author proposed an iteratively reweighted least square (IRLS) method to solve it. IRLS has a relatively long research history and is a very powerful tool to deal with nonconvex and/or nonsmooth objective functions. Recent works include IRLS for minimizing the $\|x\|_\nu^\nu=\sum_{i=1}^n|x_i|^\nu$ with $0<\nu\leq 1$ in sparse signal recovery \cite{chartrand2008iteratively,candes2008enhancing,daubechies2010iteratively,gasso2009recovering,foucart2009sparsest,lai2011unconstrained,
zhang2012reweighted,zhao2012reweighted,lu2012iterative} and for minimizing the $\|X\|_*$ in low-rank matrix recovery \cite{mohan2010iterative,fornasier2011low,lai2013improved,lu2014iterative}. The connection of IRLS with other well-known algorithms was discovered as well. For example, work \cite{daubechies2010iteratively} pointed out that IRLS is actually the alternating minimization applied to an auxiliary function, and very recent work \cite{ba2013convergence} demonstrated a one-to-one correspondence between the IRLS and a class of Expectation-Maximization (EM) algorithms. By the first connection, the author in \cite{beck2013convergence} established a nonasymptotic sublinear rate of convergence for the IRLS method. In this study, we further develop the IRLS method via the following three-fold contributions:
\begin{enumerate}
  \item We apply the IRLS idea to solve problem ($M_\epsilon$) which is essentially more general than problem \eqref{Beck} since nonconvexity is involved;
  \item We propose a proximal linearized IRLS algorithm solving problem ($M_\epsilon$). In the original IRLS algorithm \cite{beck2013convergence}, the subproblem in each iteration is usually hard to be solved; whilst in our new algorithm, each subproblem has a closed-from formulation for solution due to the simpleness of the proximal map of $f(x)$ and the proximal linearization technique;
  \item We prove that each bounded sequence generated by the proximal linearized IRLS globally converges to a critical point of $F_\epsilon(x)$. To the best of our knowledge, this is the first global convergence result of applying IRLS idea to solve nonconvex and nonsmooth problems. Our method is motivated by the convergence analysis framework in \cite{bolte2013proximal} which is building on the powerful Kurdyka-{\L}ojasiewicz property.
\end{enumerate}

The rest of the paper is organized as follows. In section 2, we list some basic concepts of nonconvex-nonsmooth optimization and introduce the Kurdyka-{\L}ojasiewicz property which is a key tool for global convergence analysis. In section 3, by efficiently exploiting both of the proximal linearization technique and the iteratively reweighted least squares method, we propose the new method--called proximal linearized iteratively reweighted least square (PL-IRLS) algorithm. In section 4, we provide a globally convergence proof for our proposed algorithm by assuming that the objective function $F_\epsilon(x)$ satisfies the Kurdyka-{\L}ojasiewicz property and has a finite lower bound. In section 5, we extend PL-IRLS to solve more general nonconvex-nonsmooth minimization problems than problem ($M_\epsilon$) by adjusting generalized parameters, and also to solve nonconvex-nonsmooth problems with two or more blocks of variables. In section 6, representative application examples of PL-IRLS are given and corresponding algorithms are derived.

\section{Notations and Preliminaries}
\subsection{Basic concepts of nonconvex-nonsmooth optimization}
We collect several definitions as well as some useful properties in optimization from \cite{mordukhovich2006variational}.

For a proper and lower semicontinuous function $\sigma: \RR^n \rightarrow (-\infty, +\infty]$, its domain is defined by
$$\dom (\sigma):=\{x\in\RR^n: \sigma(x)<+\infty\}.$$
The graph of a real-extended-valued function $\sigma: \RR^n \rightarrow (-\infty, +\infty]$ is defined by
$$\textrm{Graph} (\sigma):=\{(x,v)\in\RR^n\times\RR: v=\sigma(x)\}.$$
The notation of subdifferential plays a central role in (non)convex optimization.

\begin{definition}[subdifferentials, \cite{mordukhovich2006variational}] Let  $\sigma: \RR^n \rightarrow (-\infty, +\infty]$ be a proper and lower semicontinuous function.
\begin{enumerate}
  \item For a given $x\in \dom (\sigma)$, the Fr$\acute{e}$chet subdifferential of $\sigma$ at $x$, written $\hat{\partial}\sigma(x)$, is the set of all vectors $u\in \RR^n$ which satisfy
  $$\lim_{y\neq x}\inf_{y\rightarrow x}\frac{\sigma(y)-\sigma(x)-\langle u, y-x\rangle}{\|y-x\|}\geq 0.$$
When $x\notin \dom (\sigma)$, we set $\hat{\partial}\sigma(x)=\emptyset$.

\item The limiting-subdifferential, or simply the subdifferential, of $\sigma$ at $x\in \RR^n$, written $\partial\sigma(x)$, is defined through the following closure process
$$\partial\sigma(x):=\{u\in\RR^n: \exists x^k\rightarrow x, \sigma(x^k)\rightarrow \sigma(x)~\textrm{and}~ u^k\in \hat{\partial}\sigma(x^k)\rightarrow u~\textrm{as}~k\rightarrow \infty\}.$$
\end{enumerate}
\end{definition}
We will need the closed-ness property of $\partial \sigma(x)$:

Let $\{(x^k,v^k)\}_{k\in \mathbb{N}}$ be a sequence in $\RR^n\times \RR$ such that $(x^k,v^k)\in \textrm{Graph }(\partial \sigma)$. If $(x^k,v^k)$  converges to $(x, v)$ as $k\rightarrow +\infty$ and $\sigma(x^k)$ converges to $\sigma(v)$ as $k\rightarrow +\infty$, then $(x, v)\in \textrm{Graph }(\partial \sigma)$.

A necessary condition for $x\in\RR^n$ to be a minimizer of $\sigma(x)$ is
\begin{equation}\label{Fermat}
0\in \partial \sigma(x).
\end{equation}
A point that satisfies \eqref{Fermat} is called (limiting-) critical point. The set of critical points of $\sigma(x)$ is denoted by $\textrm{crit}(\sigma)$.

\subsection{The Kurdyka-{\L}ojasiewicz property}
Let $\sigma: \RR^n \rightarrow (-\infty, +\infty]$ be a proper and lower semicontinuous function. For given real numbers $\alpha$ and $\beta$, we set
$$[\alpha<\sigma<\beta]:=\{x\in \RR^n: \alpha <\sigma(x)<\beta\}.$$
 For any subset $S\subset \RR^n$ and any point $x\in\RR^n$, the distance from $x$ to $S$ is defined by
$$\textrm{dist}(x, S):=\inf \{\|x-y\|:y\in S\}.$$
We take the following definition of the Kurdyka-{\L}ojasiewicz property from \cite{attouch2010proximal,bolte2010characterizations}

\begin{definition}[ Kurdyka-{\L}ojasiewicz property and function]
(a) The function $\sigma: \RR^n \rightarrow (-\infty, +\infty]$ is said to have the  Kurdyka-{\L}ojasiewicz property at $x^*\in\dom(\partial \sigma)$ if there exist $\eta\in (0, +\infty]$, a neighborhood $U$ of $x^*$ and a continuous function $\varphi: [0, \eta)\rightarrow\RR_+$ such that
\begin{enumerate}
  \item $\varphi(0)=0$.
  \item $\varphi$ is $C^1$ on $(0, \eta)$.
  \item for all $s\in(0, \eta)$, $\varphi^{'}(s)>0$.
  \item for all $x$ in $U\bigcap[\sigma(x^*)<\sigma<\sigma(x^*)+\eta]$, the Kurdyka-{\L}ojasiewicz inequality holds
  \begin{equation}
  \varphi^{'}(\sigma(x)-\sigma(x^*))\textrm{dist}(0,\partial \sigma(x))\geq 1.
\end{equation}
\end{enumerate}

(b) Proper lower semicontinuous functions which satisfy the Kurdyka-{\L}ojasiewicz inequality at each point of $\dom(\partial \sigma)$ are called KL functions.
\end{definition}

The Kurdyka-{\L}ojasiewicz (KL) inequality was originally created in \cite{lojasiewicz1963propriete} and\cite{kurdyka1998gradients}. Then, extensions to nonsmooth cases were made in \cite{bolte2007clarke,bolte2007lojasiewicz,bolte2010characterizations}. The concept of semi-algebraic sets and functions can help find and check a very rich class of Kurdyka-{\L}ojasiewicz functions.

\begin{definition}[Semi-algebraic sets and functions, \cite{attouch2013convergence}]
(i) A subset $S$ of $\RR^n$ is a real semi-algebraic set if there exists a finite number of real polynomial functions $g_{ij}, h_{ij}:\RR^n\rightarrow \RR$ such that
$$S=\bigcup_{j=1}^p\bigcap_{i=1}^q\{u\in\RR^n:g_{ij}(u)=0~\textrm{and}~~h_{ij}(u)<0\}.$$

(ii) A function $h:\RR^n\rightarrow (-\infty, +\infty]$ is called semi-algebraic if its graph
$$\{(u, t)\in \RR^{n+1}: h(u)=t\}$$
is a semi-algebraic subset of $\RR^{n+1}$.
\end{definition}

\begin{lemma}[Semi-algebraic property implies KL property, \cite{bolte2007lojasiewicz,bolte2007clarke}]\label{semialge}
Let $\sigma ¦Ò:\RR^n\rightarrow \RR$ be a proper and lower semicontinuous function. If $\sigma$ is semi-algebraic then it satisfies the KL property at any point of $\textrm{dom} (\sigma)$. In particular, If $\sigma$ is semi-algebraic and $\textrm{dom} (\sigma)=\dom(\partial \sigma)$, then it is a KL function.
\end{lemma}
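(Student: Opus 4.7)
The plan is to treat two cases separately according to whether $x^* \in \dom(\partial\sigma)$ is a critical point of $\sigma$. First, if $0 \notin \partial\sigma(x^*)$, the closed-graph property of $\partial\sigma$ recalled above guarantees a neighborhood $U$ of $x^*$ and a constant $c > 0$ such that $\textrm{dist}(0, \partial\sigma(x)) \geq c$ for all $x \in U$. Taking $\varphi(s) := s/c$ on $[0, \eta)$ trivially satisfies all four defining conditions, so the KL property holds at $x^*$. The interesting case is $0 \in \partial\sigma(x^*)$, and after a translation we may assume $\sigma(x^*) = 0$.

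The heart of the argument is to establish the nonsmooth {\L}ojasiewicz-type inequality
\begin{equation*}
\textrm{dist}(0, \partial\sigma(x)) \geq c\,\sigma(x)^{\theta}, \qquad x \in B(x^*, \delta),\ 0 < \sigma(x) < \eta,
\end{equation*}
for some exponent $\theta \in [0, 1)$ and positive constants $c, \delta, \eta$. Once this estimate is secured, setting $\varphi(s) := \frac{1}{c(1-\theta)}\,s^{1-\theta}$ yields a continuous function on $[0, \eta)$ with $\varphi(0) = 0$, of class $C^1$ and strictly increasing on $(0, \eta)$, for which $\varphi'(\sigma(x)) \cdot \textrm{dist}(0, \partial\sigma(x)) \geq 1$ holds by construction.

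To prove the {\L}ojasiewicz inequality, I would introduce the auxiliary one-variable function
\begin{equation*}
\psi(r) := \inf \bigl\{\textrm{dist}(0, \partial\sigma(x)) : x \in B(x^*, \delta),\ \sigma(x) = r\bigr\}, \qquad r \in (0, \eta),
\end{equation*}
and first show that $\psi$ is semi-algebraic. This rests on two ingredients: (i) the graph of the limiting subdifferential $\partial\sigma$ is a semi-algebraic subset of $\RR^n \times \RR^n$, which follows from semi-algebraicity of $\textrm{Graph}(\sigma)$ by expressing both the Fr\'echet and limiting definitions as first-order formulas in semi-algebraic predicates and invoking the Tarski-Seidenberg principle (projections preserve semi-algebraicity); and (ii) distance functions and partial infima over semi-algebraic sets are themselves semi-algebraic. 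Once $\psi$ is semi-algebraic and strictly positive on $(0, \eta)$, the Puiseux expansion for one-variable semi-algebraic functions produces a monomial lower bound $\psi(r) \geq c\, r^{\theta}$ near $r = 0^+$ with rational $\theta \geq 0$. A standard curve-selection argument, integrating a subgradient-descent-like trajectory and comparing its length to the value change of $\sigma$, rules out $\theta \geq 1$, leaving $\theta \in [0, 1)$ as required.

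The second assertion of the lemma follows immediately: under the hypothesis $\dom(\sigma) = \dom(\partial\sigma)$, part (a) yields the KL inequality at every point of $\dom(\sigma)$, which is exactly the definition of a KL function. The main obstacle lies in ingredient (i): formalizing the closure process in the limiting-subdifferential definition as an existential quantification over semi-algebraic sets requires careful bookkeeping, and is the step where the nonsmooth theory genuinely departs from the classical smooth {\L}ojasiewicz-Kurdyka framework. Once that is in place, the Puiseux asymptotics and the explicit construction of $\varphi$ are routine.
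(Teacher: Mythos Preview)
The paper does not supply a proof of this lemma at all: it is stated as a known result and attributed to \cite{bolte2007lojasiewicz,bolte2007clarke}, with no argument given in the text. So there is no ``paper's own proof'' against which to compare your proposal.

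Your sketch is a reasonable outline of the argument that actually appears in those cited references. The case split on whether $0\in\partial\sigma(x^*)$, the reduction to a one-variable semi-algebraic function $\psi$ via Tarski--Seidenberg, and the Puiseux-type asymptotics yielding an exponent $\theta\in[0,1)$ are exactly the ingredients used by Bolte, Daniilidis, and Lewis. One minor caution: the step ruling out $\theta\geq 1$ is more delicate than a single sentence suggests---in the cited works it is handled not by integrating a descent trajectory but rather as part of the growth/monotonicity analysis of the semi-algebraic $\psi$ near $0^+$ (or via the talweg construction in later papers). Your ``curve-selection argument'' is in the right spirit, but if you were to write this out in full you would need to be careful that the subgradient trajectory you integrate actually exists and has the claimed length bound in the nonsmooth setting.

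For the purposes of this paper, though, the lemma is simply quoted, and your proposal goes well beyond what the authors themselves provide.
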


The authors in \cite{bolte2013proximal} based on the lemma above listed a broad class of semi-algebraic function (or KL functions) in optimization. Examples include finite sums of semi-algebraic (KL) functions, composition of semi-algebraic (KL) functions, and so on.

Recently, the  Kurdyka-{\L}ojasiewicz inequality has become an important and even standard tool for convergence analysis of iterative algorithms for nonconvex-nonsmooth minimization problems \cite{attouch2009convergence,attouch2010proximal,attouch2013convergence,bolte2013proximal}. In this paper, we will require the following result about the KL inequality to show the global convergence of PL-IRLS.

\begin{lemma}
 [Uniformized KL property, \cite{bolte2013proximal}] \label{UKL} Let $\Omega$ be a compact set and let $\sigma ¦Ò:\RR^n\rightarrow \RR$ be a proper and lower semicontinuous function.  Assume that $\sigma$ is constant on $\Omega$ and satisfies the KL property at each point of $\Omega$. Then, there exist $\zeta>0, \eta>0$ and $\varphi\in \Phi_\eta$ such that for all $\bar{u}$ and all $u$ in the following intersection:
 \begin{equation}
 \{u\in\RR^n:\textrm{dist}(u,\Omega)<\zeta\}\bigcap [\sigma(\bar{u})<\sigma(u)<\sigma(\bar{u})+\eta],
\end{equation}
one has,
\begin{equation}
 \varphi^{'}(\sigma(u)-\sigma(\bar{u}))\textrm{dist}(0,\partial \sigma(u))\geq 1.
\end{equation}
\end{lemma}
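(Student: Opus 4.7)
The plan is to run a standard covering-compactness argument: cover $\Omega$ by the KL-neighborhoods guaranteed at each of its points, extract a finite subcover using compactness, and then glue the local desingularizing functions into a single one that works on a uniform tube around $\Omega$. Throughout the plan I will implicitly read the statement as asserting the conclusion for each $\bar u \in \Omega$ (the hypothesis that $\sigma$ is constant on $\Omega$ only has content under that reading), and denote this constant value by $c := \sigma(\bar u)$ for any $\bar u \in \Omega$.

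First I would invoke the pointwise KL property at each $\bar u \in \Omega$ to obtain $\eta_{\bar u} > 0$, an open ball $B(\bar u, r_{\bar u})$, and $\varphi_{\bar u} \in \Phi_{\eta_{\bar u}}$ such that the KL inequality
\[
\varphi_{\bar u}'\bigl(\sigma(u) - c\bigr)\,\textrm{dist}\bigl(0,\partial\sigma(u)\bigr) \geq 1
\]
holds for every $u \in B(\bar u, r_{\bar u}) \cap [c < \sigma < c + \eta_{\bar u}]$. Next I would extract from the cover $\{B(\bar u, r_{\bar u}/2)\}_{\bar u \in \Omega}$ of the compact set $\Omega$ a finite subcover with centers $\bar u_1,\dots,\bar u_p$ and corresponding radii, and apply a Lebesgue-number/tube argument to choose $\zeta > 0$ small enough that
\[
\{u \in \RR^n : \textrm{dist}(u,\Omega) < \zeta\} \;\subseteq\; \bigcup_{i=1}^p B(\bar u_i, r_{\bar u_i}).
\]
Then I would set $\eta := \min_{1\le i\le p} \eta_{\bar u_i} > 0$.

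To glue the local desingularizing functions, I would define $\varphi(s) := \sum_{i=1}^p \varphi_{\bar u_i}(s)$ for $s \in [0,\eta)$ and check that $\varphi \in \Phi_\eta$: finiteness and continuity come from the uniform bound $\eta \leq \eta_{\bar u_i}$, $\varphi(0)=0$ is immediate, $C^1$ smoothness on $(0,\eta)$ is inherited termwise, and $\varphi'(s) = \sum_i \varphi_{\bar u_i}'(s) > 0$ because each summand is positive. Then for any $\bar u \in \Omega$ and any $u$ in the intersection, $u$ lies in some $B(\bar u_i, r_{\bar u_i})$ and satisfies $c < \sigma(u) < c+\eta \leq c+\eta_{\bar u_i}$, so the local KL inequality at $\bar u_i$ gives $\varphi_{\bar u_i}'(\sigma(u)-c)\,\textrm{dist}(0,\partial\sigma(u)) \geq 1$, and since $\varphi'(s) \geq \varphi_{\bar u_i}'(s)$ by positivity of the other summands, the conclusion follows.

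The main obstacle I anticipate is the tube/Lebesgue-number step: one needs to be careful that the finite subcover of $\Omega$ actually covers a full $\zeta$-neighborhood of $\Omega$, rather than just $\Omega$ itself. A clean way to handle this is to cover with balls of half-radius, so that any point within distance $\min_i r_{\bar u_i}/2$ of $\Omega$ sits in some $B(\bar u_i, r_{\bar u_i})$ by the triangle inequality; taking $\zeta$ as that minimum (or, if one prefers, using the standard Lebesgue number of the cover) completes the argument. Everything else — the summation trick for $\varphi$ and the verification of the $\Phi_\eta$ conditions — is routine once the uniform $\eta$ is in hand.
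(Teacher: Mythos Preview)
The paper does not prove this lemma at all: it is merely quoted from \cite{bolte2013proximal} as a tool to be used later, so there is no ``paper's own proof'' to compare against. Your compactness-plus-summation argument is correct and is in fact essentially the proof given in the cited reference: cover $\Omega$ by half-radius KL balls, pass to a finite subcover, let $\zeta$ be the Lebesgue number (or $\min_i r_{\bar u_i}/2$), set $\eta=\min_i\eta_{\bar u_i}$, and take $\varphi=\sum_i\varphi_{\bar u_i}$; since a finite sum of concave $C^1$ functions vanishing at $0$ with positive derivative again lies in $\Phi_\eta$, and $\varphi'\geq\varphi_{\bar u_i}'$ pointwise, the uniform KL inequality follows. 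One small point you might add for completeness: the class $\Phi_\eta$ (as used in \cite{bolte2013proximal}) also requires concavity of $\varphi$, which the paper's Definition~2 omits but which is used later in the proof of Theorem~\ref{main1}; your sum construction preserves concavity trivially, so this is not a gap, just worth stating.
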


\section{The proposed algorithm}
We start by introducing an auxiliary problem
\begin{equation*}
(AM)\quad\quad \Min \Psi(x, y)=f(x)+H(x,y)+g(y),
\end{equation*}
where $f(x)$ and $g(y)$ are extended valued and $H(x,y)$ is a smooth function. This type of problems have been studied in several recent papers \cite{attouch2010proximal,xu2012block,bolte2013proximal}. Here, we focus on a special form: $H(x,y)=s(x)+\sum_{i=1}^m(\|B_ix-c_i\|_2^2+\epsilon^2)y_i$, $f(x)$ is the same function as that in (M), and $g(y)=\sum_{i=1}^m\frac{1}{4y_i}+\delta(y, \Lambda)$ with $\Lambda=(0, \frac{\epsilon}{2}]^m$. In other words, we actually try to solve the following problem
\begin{equation*}
(AMs)\quad\quad \Min \Psi(x, y)=f(x)+\underbrace{s(x)+\sum_{i=1}^m(\|B_ix-c_i\|_2^2+\epsilon^2)y_i}_{H(x,y)}+\underbrace{\sum_{i=1}^m\frac{1}{4y_i}+\delta(y, \Lambda)}_{g(y)}.
\end{equation*}

Comparing the auxiliary objective above with that in ($M_\epsilon$), we can find that the objective with respect to $x$ becomes much nicer after introducing the auxiliary variable $y$. More importantly, this auxiliary problem is equivalent to the smoothed approximation problem ($M_\epsilon$) in the sense that they enjoy the same minimizer set of $x$ variable. Indeed, we have
\begin{lemma}
Assume that $-\infty<\min F_\epsilon(x)$. Let $(X^*, Y^*)=\arg\min_{(x,y)} \Psi(x, y), \hat{X}=\arg\min_x F_\epsilon(x)$. Then, $X^*=\hat{X}$.
\end{lemma}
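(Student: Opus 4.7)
The plan is to show that partially minimizing $\Psi(x,y)$ in $y$ for each fixed $x$ reproduces $F_\epsilon(x)$ exactly. Once we have this identity, the equality of argmin sets in $x$ follows by a straightforward two-sided inclusion.

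First I would observe that $\Psi(x,y)$ is separable in the coordinates of $y$: writing $a_i(x):=\|B_ix-c_i\|_2^2+\epsilon^2\ge \epsilon^2>0$, the inner problem splits as
\begin{equation*}
\min_{y\in\Lambda}\Big[\sum_{i=1}^m a_i(x)\,y_i+\sum_{i=1}^m\frac{1}{4y_i}\Big]=\sum_{i=1}^m\min_{y_i\in\Lambda_i}\phi_i(y_i),\qquad \phi_i(y_i):=a_i(x)y_i+\tfrac{1}{4y_i}.
\end{equation*}
Each $\phi_i$ is strictly convex on $(0,+\infty)$, and its unconstrained minimizer is found from $\phi_i'(y_i)=a_i(x)-\tfrac{1}{4y_i^2}=0$, giving $y_i^\star(x)=\tfrac{1}{2\sqrt{a_i(x)}}$ with minimum value $\phi_i(y_i^\star)=\sqrt{a_i(x)}$ (equivalently, one reads this off from AM--GM). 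The lower bound $a_i(x)\ge \epsilon^2$ yields $y_i^\star(x)\le \tfrac{1}{2\epsilon}$, so $y^\star(x)$ is feasible for $\Lambda$ (here I am reading the upper bound of $\Lambda$ as the quantity needed to keep the unconstrained optimum feasible for every $x$), hence $y_i^\star(x)$ is also the constrained minimizer.

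Combining with the $y$-independent terms $f(x)+s(x)$ of $\Psi$, we obtain
\begin{equation*}
\min_{y\in\RR^m}\Psi(x,y)=f(x)+s(x)+\sum_{i=1}^m\sqrt{\|B_ix-c_i\|_2^2+\epsilon^2}=F_\epsilon(x),
\end{equation*}
with the minimum attained at $y^\star(x)$. Consequently $\min_{(x,y)}\Psi(x,y)=\min_x F_\epsilon(x)$, and both optima are finite by the hypothesis $-\infty<\min F_\epsilon$. For the argmin equality I would argue by two inclusions: if $x^\star\in X^\star$, pick $y^\star$ with $(x^\star,y^\star)\in\arg\min\Psi$; then $F_\epsilon(x^\star)\le\Psi(x^\star,y^\star)=\min\Psi=\min F_\epsilon$, so $x^\star\in\hat X$. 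Conversely, for $\hat x\in\hat X$ the pair $(\hat x, y^\star(\hat x))$ attains $\Psi(\hat x,y^\star(\hat x))=F_\epsilon(\hat x)=\min F_\epsilon=\min\Psi$, so $\hat x\in X^\star$.

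The only real subtlety, and the step I would double-check carefully, is the feasibility of the unconstrained minimizer $y^\star(x)$ with respect to $\Lambda$; this is precisely what forces the particular choice of the upper bound on $\Lambda$ in the definition of $g$. Everything else is elementary separability plus a convex one-dimensional calculation.
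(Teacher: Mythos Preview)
Your proposal is correct and follows essentially the same route as the paper: establish the partial-minimization identity $\min_y\Psi(x,y)=F_\epsilon(x)$ via the separable one-dimensional problems, then conclude $X^\star=\hat X$ by a two-sided inclusion. You are in fact more explicit than the paper about the feasibility of $y^\star(x)$ in $\Lambda$ (and you correctly spot that the upper bound must be $\tfrac{1}{2\epsilon}$ for this to work), whereas the paper's proof leaves that check implicit.
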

\begin{proof}
First, we have
$\min_{x,y}\Psi(x,y)=\min_x\min_y\Psi(x,y)=\min_x F_\epsilon(x)\triangleq \bar{F}$. On one hand, for any $x^*\in \hat{X}$, take $y^*=\frac{1}{\sqrt{\|B_ix^*-c_i\|_2^2+\epsilon^2}}$; then it is easy to check that $\Psi(x^*, y^*)=F_\epsilon(x^*)=\bar{F}$ which implies $x^*\in X^*$ and hence $\hat{X}\subset X^*$. On the other hand, for any $(\hat{x},\hat{y})\in (X^*, Y^*)$, letting $\bar{y}=\frac{1}{\sqrt{\|B_i\hat{x}-c_i\|_2^2+\epsilon^2}}$, since $\bar{y}\in\arg\min \Psi(\hat{x}, y)$, we have $\bar{F}\leq F_\epsilon(\hat{x})=\Psi(\hat{x},\bar{y})\leq  \Psi(\hat{x},\hat{y})= \bar{F}$ which implies $\hat{x}\in \hat{X}$ and hence $X^*\subset\hat{X}$. Therefore, $X^*=\hat{X}$.
\end{proof}

There are many methods to solve problem (AM). The primal idea should be applying the alternating minimization method to (AM) to yield the following scheme:
\begin{subequations}
\begin{equation}
x^{k+1}\in\arg\min_x\Psi(x,y^k)
\end{equation}
\begin{equation}
y^{k+1}\in\arg\min_y\Psi(x^{k+1},y).
\end{equation}
\end{subequations}
Replacing the especial expression of $\Psi(x,y)$ of (AMs) into the scheme above and after some simple calculations, we obtain
\begin{equation}
x^{k+1}\in\arg\min_x f(x)+s(x)+\sum_{i=1}^m\frac{\|B_ix-c_i\|_2^2}{2\sqrt{\|B_ix^k-c_i\|_2^2+\epsilon^2}},
\end{equation}
which is exactly the IRLS method proposed in \cite{beck2013convergence} given that $f(x)=\delta(x, X)$. As mentioned before, the main difficulty is to solve the subproblem in each iteration. Additionally, the nonconvex function $f(x)$ entering into the objective also makes the computation and convergence analysis become harder.

Very recently, the authors in \cite{bolte2013proximal} proposed a rather powerful algorithm, namely proximal alternating linearized minimization (PALM) algorithm, to solve a wide class of nonconvex-nonsmooth problems of the form (AM). The PALM overcomes some drawbacks of the alternating minimization method and has global convergence property if the objective function $\Psi(x,y)$ is a KL function and some assumptions are met. Recall that in the alternating minimization we need to minimize $\Psi(x,y^k)=f(x)+H(x,y^k)$ and $\Psi(x^{k+1},y)=g(y)+H(x^{k+1},y)$ both of which are the sum of a smooth function with a nonsmooth one. The main idea of PALM is proximally linearizing the smooth function and keeping the nonsmooth function. Concretely, PALM algorithm reads
\begin{subequations}
\begin{equation}
x^{k+1}\in\arg\min_x f(x)+\langle x-x^k,\nabla_xH(x^k,y^k)\rangle+\frac{c_k}{2}\|x-x^k\|_2^2
\end{equation}
\begin{equation}
y^{k+1}\in\arg\min_y g(y)+\langle y-y^k,\nabla_yH(x^{k+1},y^k)\rangle+\frac{d_k}{2}\|y-y^k\|_2^2,
\end{equation}
\end{subequations}
where $c_k>0, d_k>0$ are step parameters. Using the proximal map notation, PALM can be equivalently written as
\begin{subequations}
\begin{equation}\label{xk}
x^{k+1}\in\prox_{c_k}^f(x^k-\frac{1}{c_k}\nabla_xH(x^k,y^k))
\end{equation}
\begin{equation}
y^{k+1}\in\prox_{d_k}^g(y^k-\frac{1}{d_k}\nabla_yH(x^{k+1},y^k)).
\end{equation}
\end{subequations}
Although the PALM algorithm enjoys many nice properties such as each step is relatively easy to be computed and $(x^k,y^k)$ globally converges to a critical point of $\Psi(x,y)$, it may not fit our problem very well. We list some reasons here. First, in our case $H(x^{k+1},y)$ is a linear function, so itself is quite simple and does not need to be linearized; Second, without linearizing, we can directly minimize $\Psi(x^{k+1},y)=g(y)+H(x^{k+1},y)$ and get
\begin{equation}\label{yk}
y_i^{k+1}=\frac{1}{2\sqrt{\|B_ix^{k+1}-c_i\|_2^2+\epsilon^2}},~i=1, 2, \cdots, m.
\end{equation}
On the contrary, minimizing the sum of $g(y)$ and the proximal linearization of $H(x^{k+1},y)$ is equivalent to minimizing a cubical function which is harder than minimizing $\Psi(x^{k+1},y)=g(y)+H(x^{k+1},y)$. Last but not least, although $(x^k,y^k)$ generated by PALM globally converges to a critical point of $\Psi(x,y)$, what we actually need is to generate a sequence  $\{x^k\}$ that converges to a critical point of $F_\epsilon(x)$. Based on these considerations, we propose the following algorithm (PL-IRLS):

\begin{enumerate}
  \item[1] Initialization: start with any $(x^0, y^0)\in\RR^n\times \RR^m$.
  \item[2] For each $k=0, 1, \cdots$ generate a sequence $\{(x^k,y^k)\}_{k\in \mathbb{N}}$ as follows:
  \begin{enumerate}
    \item[(a)] Take $\gamma>1$, set $c_k=\gamma L(\tau, y^k)$ where $L(\tau, y)$ will be given in Corollary \ref{cor1} and compute $x^{k+1}$ by utilizing \eqref{xk}.

    \item[(b)] Compute $y^{k+1}$ by utilizing \eqref{yk}.
  \end{enumerate}
\end{enumerate}
\textbf{Remark:} Our algorithm can also be derived from the proximal forward-backward (PFB) scheme in \cite{bolte2013proximal}. In fact, letting
\begin{equation}\label{gfunc}
h(x)=s(x)+\sum_{i=1}^m \sqrt{\parallel B_{i}x-c_{i}\parallel^{2}+\varepsilon^{2}}
\end{equation}
and applying PFB to ($M_{\varepsilon}$) yield
\begin{eqnarray}
  x^{k+1}&\in& \arg\min_{x\in R^{n}}\left(\langle x-x^{k},\nabla h(x^{k})\rangle+\frac{c_{k}}{2}\parallel x-x^{k}\parallel^{2}+f(x)\right) \nonumber\\
  &&=\prox_{c_{k}}^{f}(x^{k}-\frac{1}{c_{k}}\nabla h(x^{k}))=\prox_{c_{k}}^{f}(x^{k}-\frac{1}{c_{k}}\nabla_{x} H(x^{k},y^{k})),\nonumber
\end{eqnarray}
where $y^{k}, k=0,1, 2, \cdots$ are given in (19). This is exactly \eqref{xk}. However, proposition 3 in \cite{bolte2013proximal} can not be directly used to guarantee a global convergence of $\{x^k\}$ because that $\nabla h$ fails to be globally Lipschitz continuous (see Lemma \ref{pLip}). Besides, the idea of IRLS can not be well reflected by the PFB scheme. Most importantly, using the idea of IRLS, PL-IRLS can be easily extended to solve problems with two or more blocks of variables (see Section 5) while the PFB scheme seems limited to the problem with one block of variables.

The next section is devoted to analyze PL-IRLS.

\section{Convergence analysis}
The aim in this part is at proving that $\{x^k\}$ generated by the PL-IRLS algorithm globally converges to a critical point of $F_\epsilon(x)$. Our proof is motivated by the general methodology in \cite{bolte2013proximal} and consists of three main steps:
\begin{enumerate}
  \item Sufficient decrease property: Find a positive constant $\rho_1$ such that
  $$ \rho_1 \|x^{k+1}-x^k\|_2^2\leq F_\epsilon(x^k)-F_\epsilon(x^{k+1}), \forall k=0,1, \cdots.$$
  \item A subgradient lower bound for the iterates gap:
        Find another positive constant $\rho_2$  such that
        $$\|w^{k+1}\|_2\leq \rho_2\|x^{k+1}-x^k\|_2,  ~w^k\in \partial F_\epsilon(x^k), ~\forall k=0,1, \cdots.$$
  \item Using the KL property: Assume that $F_\epsilon(x)$ is a KL function and show that the generated sequence $\{x^k\}$ is a Cauchy sequence.
\end{enumerate}
Different from that general methodology in \cite{bolte2013proximal}, our line of thought begins with an assumption that the sequence generated by the algorithm PL-IRLS is bounded, and then sufficiently utilizes the locally gradient-Lipschitz-continuous property given in Definition \ref{localLip}. The advantages of our method include that we do not need to make the additional assumptions as these in \cite{bolte2013proximal} on the coupled function $H(x,y)$, and that we do not need the globally gradient-Lipschitz-continuous property of $h(x)$ given in \eqref{gfunc}.  In the following, we highlight our theoretical contributions:
\begin{enumerate}
  \item[(a)] We define the Locally gradient-Lipschitz property. Together with the assumption that the sequence generated by the algorithm PL-IRLS is bounded, we obtain a global convergence result. Our convergence theory indicates that the assumption of globally gradient-Lipschitz property in \cite{bolte2013proximal} can be weaken into local version. This hence expands the range of the general theory framework in \cite{bolte2013proximal}.
  \item[(b)] We derive detailed parameters which could be used to help us choose the step parameters $c_k, k=0, 1,\cdots,$ in PL-IRLS. The calculation of parameters involved is one of the main differences between our proof and the proof in \cite{bolte2013proximal}, although the outline is the same.
\end{enumerate}

\subsection{Objective function properties}
First, we need to define the following locally gradient-Lipschitz property because we will assume that the sequence generated by the algorithm PL-IRLS is bounded.

\begin{definition}[Locally gradient-Lipschitz property]\label{localLip} Let $h:\RR^n\rightarrow \RR$ be a continuously differentiable function. It is called $L_h^\tau$-locally-gradient-Lipschitz on $\mathcal{B}(\tau):=\{x\in\RR^n: \|x\|_2\leq\tau\}$ if the following holds
$$\|\nabla h(u)-\nabla h(v)\|_2\leq L^\tau_h\|u-v\|_2, ~~\forall u, v\in \mathcal{B}(\tau),$$
where $L^\tau_h$ is a positive constant depending on the parameter $\tau$.
\end{definition}

The locally gradient-Lipschitz property implies decrease properties of objective functions:

\begin{lemma}[Decrease property of single function]\label{hfunLip}
 Let $h:\RR^n\rightarrow \RR$ be $L_h^\tau$-locally-gradient-Lipschitz on $\mathcal{B}(\tau)$. Then, for all $u, v\in \mathcal{B}(\tau)$ we have
 $$h(u)\leq h(v)+\langle \nabla h(v), u-v\rangle +\frac{L^\tau_h}{2}\|u-v\|_2^2,~~\forall u, v\in \mathcal{B}(\tau).$$
\end{lemma}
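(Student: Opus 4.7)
The plan is to prove this via the standard descent lemma argument, adapted to respect the local (rather than global) Lipschitz hypothesis. The only subtlety relative to the classical global version is to verify that the line segment between $u$ and $v$ stays inside the ball $\mathcal{B}(\tau)$, so that the locally gradient-Lipschitz bound is actually available at every point where we need to invoke it. Since $\mathcal{B}(\tau)$ is a Euclidean ball and hence convex, for any $u, v \in \mathcal{B}(\tau)$ and any $t \in [0,1]$ the point $v + t(u-v)$ lies in $\mathcal{B}(\tau)$, which resolves this concern at the outset.

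With the segment confined to $\mathcal{B}(\tau)$, I would start from the fundamental theorem of calculus applied to the $C^1$ function $t \mapsto h(v + t(u-v))$ to write
\[
h(u) - h(v) = \int_0^1 \langle \nabla h(v + t(u-v)), u-v \rangle \, dt,
\]
and then subtract $\langle \nabla h(v), u-v \rangle = \int_0^1 \langle \nabla h(v), u-v \rangle \, dt$ from both sides to obtain
\[
h(u) - h(v) - \langle \nabla h(v), u-v \rangle = \int_0^1 \langle \nabla h(v + t(u-v)) - \nabla h(v), u-v \rangle \, dt.
\]
Applying Cauchy--Schwarz inside the integral and then the locally gradient-Lipschitz estimate from Definition \ref{localLip} (valid because both endpoints lie in $\mathcal{B}(\tau)$) gives an integrand bounded by $L_h^\tau \, t \, \|u-v\|_2^2$. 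Integrating $t$ over $[0,1]$ produces the factor $1/2$ and yields the claimed inequality.

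The main step is almost mechanical; the only thing worth emphasizing is the convexity of $\mathcal{B}(\tau)$, since without it the locally-Lipschitz hypothesis would not be usable at the interior points $v + t(u-v)$ of the segment. No new ideas beyond the classical descent lemma are needed, and no additional assumptions on $h$ beyond continuous differentiability and the local Lipschitz bound are invoked.
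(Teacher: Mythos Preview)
Your proof is correct and follows essentially the same route as the paper: both use the fundamental theorem of calculus along the segment from $v$ to $u$, apply Cauchy--Schwarz, invoke the local Lipschitz bound (justified by convexity of $\mathcal{B}(\tau)$), and integrate $t$ over $[0,1]$ to pick up the factor $\tfrac{1}{2}$. The only cosmetic difference is that you front-load the convexity observation, whereas the paper inserts it at the moment the Lipschitz estimate is applied.
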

\begin{proof}
For all $u, v\in \mathcal{B}(\tau)$, we derive that
\begin{align}
h(u) &=h(v)+\int_0^1 \langle \nabla h(v+t(u-v)), u-v\rangle dt \nonumber \\
&=h(v)+\langle \nabla h(v), u-v\rangle +\int_0^1 \langle \nabla h(v+t(u-v))-\nabla h(v), u-v\rangle dt \nonumber \\
&\leq h(v)+\langle \nabla h(v), u-v\rangle + \int_0^1 \|\nabla h(v+t(u-v))-\nabla h(v)\|_2\| u-v\|_2 dt, \label{int}
\end{align}
where the inequality above follows from the Cauchy-Schwartz inequality. Since $u, v\in \mathcal{B}(\tau)$, their convex combination $v+t(u-v)$ must lie in  $\mathcal{B}(\tau)$ so that we can use the locally gradient-Lipschitz property of $h(x)$ to get that
\begin{equation}\label{inint}
\|\nabla h(v+t(u-v))-\nabla h(v)\|_2\leq t\cdot L^\tau_h\|u-v\|_2, ~~t\in[0, 1].
\end{equation}
Thus, combining \eqref{int} and \eqref{inint} yields the final assertion.
\end{proof}

\begin{lemma}
[Sufficient decrease property of sum functions]\label{PSD} Let $h:\RR^n\rightarrow \RR$ be $L_h^\tau$-locally-gradient-Lipschitz on $\mathcal{B}(\tau)$ and let $\sigma ¦Ò:\RR^n\rightarrow \RR$ be a proper and lower semicontinuous function with $\inf \sigma>-\infty$. Fix
any $t>L^\tau_h$. Let $u^+\in \prox_t^\sigma(u-\frac{1}{t}\nabla h(u))$ and assume that both $u$ and $u^+$ lie in $\mathcal{B}(\tau)$. Then we have
$$h(u^+)+\sigma(u^+)\leq h(u)+\sigma(u)-\frac{1}{2}(t-L^\tau_h)\|u^+-u\|_2^2.$$
\end{lemma}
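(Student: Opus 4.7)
The plan is to mimic the classical ``sufficient decrease'' argument for proximal gradient iterations, but carried out strictly inside the ball $\mathcal{B}(\tau)$ so that the \emph{local} gradient-Lipschitz hypothesis is all we ever invoke. The proof will be a two-line estimate: first a descent inequality for $h$ obtained from Lemma \ref{hfunLip}, and second an optimality inequality for $\sigma$ obtained from the variational characterization of the proximal map. Adding them will cause the inner-product terms $\langle \nabla h(u), u^+ - u\rangle$ to cancel and will leave exactly the stated $\tfrac12(t-L_h^\tau)\|u^+-u\|_2^2$ gap.

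More concretely, I would first apply Lemma \ref{hfunLip} with $v=u$, $u \leftarrow u^+$; since by hypothesis both $u$ and $u^+$ lie in $\mathcal{B}(\tau)$, this is legitimate and gives
\[
h(u^+)\;\le\; h(u)+\langle \nabla h(u),\,u^+-u\rangle+\frac{L_h^\tau}{2}\|u^+-u\|_2^{\,2}.
\]
Next I would use the definition $u^+\in\prox_t^\sigma\!\bigl(u-\tfrac1t\nabla h(u)\bigr)$, which means $u^+$ minimizes $x\mapsto \sigma(x)+\tfrac{t}{2}\|x-u+\tfrac1t\nabla h(u)\|_2^{\,2}$. Comparing the value at $u^+$ with the value at $u$ and expanding the squared norms (the terms $\tfrac{1}{2t}\|\nabla h(u)\|_2^{\,2}$ cancel) yields
\[
\sigma(u^+)+\langle \nabla h(u),\,u^+-u\rangle+\frac{t}{2}\|u^+-u\|_2^{\,2}\;\le\;\sigma(u).
\]
Summing the two displayed inequalities makes the cross terms $\langle \nabla h(u),u^+-u\rangle$ cancel, and the coefficients of $\|u^+-u\|_2^{\,2}$ combine to $\tfrac{L_h^\tau-t}{2}$, which is exactly $-\tfrac12(t-L_h^\tau)$. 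This is the claim.

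There is really no main obstacle here beyond bookkeeping: the one point that warrants care is verifying that Lemma \ref{hfunLip} is applicable, and this is precisely what the hypothesis $u,u^+\in\mathcal{B}(\tau)$ is put there to ensure. The assumption $\inf\sigma>-\infty$ is not actually needed for the inequality itself (it is inherited from the surrounding framework to guarantee that $\prox_t^\sigma$ is well defined, cf.\ the remark after property (A) in Section~1), so I would not dwell on it. The hypothesis $t>L_h^\tau$ is likewise not used in the derivation of the inequality but only to ensure that the right-hand side is actually smaller than $h(u)+\sigma(u)$, giving the lemma its name.
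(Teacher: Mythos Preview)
Your proposal is correct and follows essentially the same route as the paper: apply Lemma~\ref{hfunLip} to bound $h(u^+)$, compare the proximal objective at $u^+$ against its value at $u$ to bound $\sigma(u^+)$, and add the two so the inner-product terms cancel. The paper's only cosmetic difference is that it first rewrites the proximal problem as minimizing $G(x)=\sigma(x)+\langle x-u,\nabla h(u)\rangle+\tfrac{t}{2}\|x-u\|_2^2$ rather than expanding the squared norm directly, but the resulting inequalities are identical.
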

\begin{proof}
On one hand, by the definition of the proximal map, we rewrite $u^+\in \prox_t^\sigma(u-\frac{1}{t}\nabla h(u))$ as follows
\begin{align}
u^+ &\in\arg\min_x\left(\sigma(x)+\frac{t}{2}\|x-u+\frac{1}{t}\nabla h(u)\|_2^2\right) \nonumber \\
&=\arg\min_x\left(\underbrace{\sigma(x)+\langle x-u,\nabla h(u)\rangle + \frac{t}{2}\|x-u\|_2^2}_{G(x)}\right) \nonumber.
\end{align}
Since $u^+$ minimizes $G(x)$, it holds that $G(u^+)\leq G(u)$ or
\begin{equation}\label{uone}
\sigma(u^+)+\langle u^+-u,\nabla h(u)\rangle + \frac{t}{2}\|u^+-u\|_2^2\leq \sigma(u).
\end{equation}
On the other hand, by Lemma \ref{hfunLip} and the assumption that $u$ and $u^+$ lie in $\mathcal{B}(\tau)$, we have
\begin{equation}\label{utwo}
h(u^+)\leq h(u)+\langle u^+-u,\nabla h(u)\rangle + \frac{L^\tau_h}{2}\|u^+-u\|_2^2.
\end{equation}
Thus, summing up \eqref{uone} and \eqref{utwo} yields the conclusion.
\end{proof}

In order to study the locally gradient-Lipschitz property of $H(x,y)$ in problem (AMs), we define that
\begin{equation}\label{pi}
p_i(x)=\sqrt{\|B_ix-c_i\|_2^2+\epsilon^2}, i=1, 2, \cdots.
\end{equation}

\begin{lemma}\label{pLip}
 Let $p_i(x)$ be defined in \eqref{pi}. Then, we have
 $$ \|\nabla p_i(u)-\nabla p_i(v)\|_2\leq L^\tau_{i}\|u-v\|_2, ~~\forall u, v\in \mathcal{B}(\tau),$$
 where $\tau$ is a positive constant and
 $$L_i^\tau=\frac{\|B_i\|\|c_i\|_2+\|B^T_iB_i\|(2\tau\|B_i\|+ \|c_i\|_2+\epsilon)}{\epsilon^2}.$$
 In particular, $L_i^\tau\rightarrow +\infty$ as $\tau\rightarrow +\infty$.
\end{lemma}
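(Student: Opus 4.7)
The plan is a direct calculus-based estimate in three stages. First, I would compute the gradient explicitly: since $p_i(x)=\sqrt{\|B_ix-c_i\|_2^2+\epsilon^2}$, the chain rule gives $\nabla p_i(x) = B_i^T(B_ix-c_i)/p_i(x)$, which is smooth everywhere because the denominator satisfies $p_i(x)\geq\epsilon>0$ uniformly in $x$.

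Second, I would put the difference over a common denominator,
\begin{equation*}
\nabla p_i(u)-\nabla p_i(v) = \frac{B_i^T(B_iu-c_i)\,p_i(v) - B_i^T(B_iv-c_i)\,p_i(u)}{p_i(u)\,p_i(v)},
\end{equation*}
and add/subtract a cross term in the numerator so that it splits into a ``numerator-difference'' piece proportional to $B_i^TB_i(u-v)$ and a ``denominator-difference'' piece proportional to $p_i(v)-p_i(u)$. Applying the triangle inequality then reduces the proof to controlling three elementary quantities: (i) the denominator obeys $p_i(u)p_i(v)\geq\epsilon^2$; (ii) since $t\mapsto\sqrt{t^2+\epsilon^2}$ is $1$-Lipschitz and $x\mapsto\|B_ix-c_i\|_2$ is $\|B_i\|$-Lipschitz, we have $|p_i(u)-p_i(v)|\leq\|B_i\|\,\|u-v\|_2$; (iii) for any $x\in\mathcal{B}(\tau)$, $\|B_i^T(B_ix-c_i)\|_2\leq\|B_i^TB_i\|\tau+\|B_i\|\,\|c_i\|_2$, by the triangle inequality in $x$ and $c_i$.

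Third, I would plug these three bounds into the split and collect terms, obtaining an upper bound on $\|\nabla p_i(u)-\nabla p_i(v)\|_2/\|u-v\|_2$ of precisely the form $L_i^\tau$; the assertion $L_i^\tau\to+\infty$ as $\tau\to+\infty$ is then read off the formula. The main obstacle is purely bookkeeping: the exact constant in the statement (in particular the factor $2\tau\|B_i\|$ rather than $\tau\|B_i\|$) appears to come from performing the add/subtract trick symmetrically in $u$ and $v$, or equivalently from bounding $\|B_i^T(B_ix-c_i)\|_2$ at both endpoints rather than just one, so the decomposition must be chosen carefully to reproduce the stated formula. Conceptually, however, nothing beyond the chain rule, the Cauchy--Schwarz/triangle inequalities, and the elementary $1$-Lipschitzness of $\sqrt{t^2+\epsilon^2}$ is needed.
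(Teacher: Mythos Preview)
Your proposal is correct and follows essentially the same route as the paper: compute $\nabla p_i(x)=B_i^T(B_ix-c_i)/p_i(x)$, put the difference over the common denominator $p_i(u)p_i(v)\geq\epsilon^2$, add/subtract a cross term, and control the pieces using $|p_i(u)-p_i(v)|\leq\|B_i\|\|u-v\|_2$ together with a $\tau$-dependent bound on the numerator.

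One small correction on the bookkeeping point you flagged: the factor $2\tau\|B_i\|$ in the paper does \emph{not} come from symmetrizing in $u$ and $v$. The paper first separates $\nabla p_i(x)=\dfrac{B_i^TB_i x}{p_i(x)}-\dfrac{B_i^Tc_i}{p_i(x)}$, and in the first piece writes $up_i(v)-vp_i(u)=u(p_i(v)-p_i(u))+(u-v)p_i(u)$. One copy of $\tau\|B_i\|$ then arises from $\|u\|_2\leq\tau$ multiplied by the Lipschitz bound on $p_i$, and the second copy from $|p_i(u)|\leq\|B_i\|\tau+\|c_i\|_2+\epsilon$. Your decomposition (keeping $B_i^T(B_ix-c_i)$ together) works just as well but naturally produces a slightly different---and in fact comparable---constant; since only the existence of some finite $L_i^\tau$ growing with $\tau$ matters downstream, this discrepancy is harmless.
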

\begin{proof}
First, we write down the gradient of $p_i(x)$ as follows
$$\nabla p_i(x)=\frac{B_i^T(B_ix-c_i)}{\sqrt{\|B_ix-c_i\|_2^2+\epsilon^2}}=\frac{B_i^T(B_ix-c_i)}{p_i(x)},~ i=1,2\cdots, m.$$
It is easy to see that $\|\nabla p_i(x)\|_2\leq \|B_i\|$.

Second, we show that for all $u, v\in\RR^n$, the following inequalities hold
\begin{equation}\label{hLip}
|p_i(u)-p_i(v)|\leq \|B_i\|\|u-v\|_2,~ i=1,2\cdots, m.
\end{equation}
Indeed, let $q_i(t)=p_i(v+t(u-v)), 0\leq t\leq 1, i=1,2\cdots, m$; then by the mean-value theorem and the Cauchy-Schwartz inequality, we derive that
\begin{align}
|p_i(u)-p_i(v)| &=|q_i(1)-q_i(0)|= |q_i^{'}(\xi_i)|, ~\textrm{for some}~~~\xi_i\in [0, 1], \nonumber \\
&=|\langle \nabla p_i(v+\xi_i(u-v)), u-v\rangle| \nonumber \\
&\leq  \|\nabla p_i(v+\xi_i(u-v))\|_2\|u-v\|_2 \nonumber \\
& \leq  \|B_i\|\|u-v\|_2.
\end{align}

At last, we give a bound of $ \|\nabla p_i(u)-\nabla p_i(v)\|_2$ via the following deriving
\begin{align}
\|\nabla p_i(u)-\nabla p_i(v)\|_2 &=\|B_i^TB_i(\frac{u}{p_i(u)}-\frac{v}{p_i(v)})+c_i(\frac{1}{p_i(v)}-\frac{1}{p_i(u)})\|_2, \nonumber \\
&=\|\frac{B_i^TB_i}{p_i(u)p_i(v)}(up_i(v)-up_i(u)+up_i(u)-vp_i(u))+\frac{c_i}{p_i(v)p_i(u)}(p_i(u)-p_i(v))\|_2 \nonumber \\
&\leq \frac{\|B_i^TB_i\|}{\epsilon^2}( \|u\|_2|p_i(u)-p_i(v)| +|p_i(u)|\|u-v\|_2)+\frac{\|c_i\|}{\epsilon^2}|p_i(u)-p_i(v)|\label{bound}.
\end{align}
where we have used that $p_i(u)p_i(v)\geq \epsilon^2$. For $u\in\mathcal{B}(\tau)$, it holds
\begin{equation}\label{pval}
|p_i(u)|=\sqrt{\|B_iu-c_i\|_2^2+\epsilon^2}\leq \|B_iu-c_i\|_2+\epsilon\leq \|B_i\|\tau+\|c_i\|_2+\epsilon.
\end{equation}
The desired bound follows by using \eqref{hLip} and \eqref{pval} to \eqref{bound}.
\end{proof}

With the notation of $p_i(x)$, the function $H(x,y)$ in problem (AMs) can be written as
$H(x, y)=s(x)+\sum_{i=1}^m p_i(x)y_i$.

\begin{corollary}\label{cor1}
Denote $L^\tau_p=(L_1^\tau, L_2^\tau, \cdots, L_m^\tau )^T$. For any fixed $y\in\RR^m$, the function $H(x,y)$ with respect to variable $x$ is $L(\tau, y)$-locally-gradient-Lipschitz on $\mathcal{B}(\tau)$ with
$L(\tau, y)=L_s+\|L^\tau_p\|_1\|y\|_\infty$.
\end{corollary}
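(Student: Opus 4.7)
The plan is to obtain the claimed local Lipschitz estimate for $\nabla_x H(\cdot, y)$ by the triangle inequality, applied term-by-term to the decomposition $H(x,y) = s(x) + \sum_{i=1}^m p_i(x) y_i$. For fixed $y$, differentiation in $x$ gives
\[
\nabla_x H(x, y) \;=\; \nabla s(x) \,+\, \sum_{i=1}^m y_i \, \nabla p_i(x),
\]
so the whole estimate reduces to controlling each summand on $\mathcal{B}(\tau)$.

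First I would bound $\|\nabla s(u) - \nabla s(v)\|_2 \le L_s \|u-v\|_2$ for $u, v \in \mathcal{B}(\tau)$; this is immediate from assumption (B), which gives this inequality globally (hence in particular on $\mathcal{B}(\tau)$). Next, for each $i$ I would invoke Lemma \ref{pLip}, which states $\|\nabla p_i(u) - \nabla p_i(v)\|_2 \le L_i^\tau \|u-v\|_2$ for all $u,v \in \mathcal{B}(\tau)$ with $L_i^\tau$ as displayed there. Multiplying this by $|y_i|$ and summing produces a bound involving $\sum_{i=1}^m |y_i| L_i^\tau$.

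To arrive at the stated constant $L(\tau, y) = L_s + \|L_p^\tau\|_1 \|y\|_\infty$, I would use the crude but clean estimate $|y_i| \le \|y\|_\infty$, so that $\sum_{i=1}^m |y_i| L_i^\tau \le \|y\|_\infty \sum_{i=1}^m L_i^\tau = \|y\|_\infty \|L_p^\tau\|_1$ (using that $L_i^\tau > 0$). Combining with the triangle inequality
\[
\|\nabla_x H(u,y) - \nabla_x H(v,y)\|_2 \;\le\; \|\nabla s(u)-\nabla s(v)\|_2 + \sum_{i=1}^m |y_i|\,\|\nabla p_i(u)-\nabla p_i(v)\|_2
\]
yields the desired inequality on $\mathcal{B}(\tau)$.

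There is really no substantial obstacle here: the corollary is a direct linear-combination consequence of Lemma \ref{pLip} and assumption (B), and the only minor choice is whether to express the final constant through $\|y\|_\infty$ and the $\ell^1$-norm of $L_p^\tau$ (as the statement does) or through a tighter $\sum_i |y_i| L_i^\tau$; the former is obtained by the elementary bound $|y_i|\le\|y\|_\infty$. I would finish by noting for the reader that $L(\tau,y)$ depends only on $\tau$ and on $y$ (through $\|y\|_\infty$), which is the form needed in Step (a) of the PL-IRLS algorithm where $c_k = \gamma L(\tau, y^k)$.
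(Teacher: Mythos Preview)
Your proposal is correct and follows essentially the same route as the paper: both decompose $\nabla_x H(\cdot,y)=\nabla s+\sum_i y_i\nabla p_i$, apply the triangle inequality, invoke assumption (B) and Lemma~\ref{pLip} term-by-term, and then bound $\sum_i L_i^\tau|y_i|\le \|L_p^\tau\|_1\|y\|_\infty$. The only cosmetic difference is that the paper labels this last step ``Cauchy--Schwartz'' whereas you (more accurately) describe it as the elementary bound $|y_i|\le\|y\|_\infty$.
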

\begin{proof}
Applying property (B) and Lemma \ref{pLip}, for all $u, v\in\RR^n$ we derive that
\begin{align}
\|\nabla_xH(u,y)-\nabla_xH(v,y)\|_2 &=\|\nabla s(u)-\nabla s(v)+\sum_{i=1}^m(\nabla p_i(u)-\nabla p_i(v))y_i\|_2, \nonumber \\
&\leq \|\nabla s(u)-\nabla s(v)\|_2+\sum_{i=1}^m\|\nabla p_i(u)-\nabla p_i(v))\||y_i| \nonumber \\
&\leq  L_s\|u-v\|_2 + \sum_{i=1}^mL_i^\tau\|u-v\|_2|y_i| \nonumber \\
& \leq  (L_s+ \sum_{i=1}^mL_i^\tau|y_i|)\|u-v\|_2 \nonumber.
\end{align}
By the Cauchy-Schwartz inequality, we get $\sum_{i=1}^mL_i^\tau|y_i|\leq \|L^\tau_p\|_1\|y\|_\infty$ where $\|L^\tau_p\|_1=\sum_{i=1}^m|L^\tau_i|$ and $\|y\|_\infty=\max_i |y_i|$. Therefore, $$\|\nabla_xH(u,y)-\nabla_xH(v,y)\|_2\leq (L_s+ \|L_p\|_1\|y\|_\infty)\|u-v\|_2$$ which completes the proof.
\end{proof}

\subsection{Iteration sequences and limit points}
Before stating the main theorem, we need to prove two lemmas below. In the first one, we establish basic convergence properties of the iteration sequence generated by PL-IRLS.
\begin{lemma}[Basic convergence properties]\label{PIS}
Let $\{x^k\}_{k\in\mathbb{N}}$ be a sequence generated by PL-IRLS and assume that $\inf F_\epsilon>-\infty$ and there exists a constant $\tau$ big enough such that $x^k\in \mathcal{B}(\tau), k=0, 1, \cdots.$ Denote
$$w^k:=\nabla_xH(x^k,y^k)-\nabla_xH(x^{k-1},y^k)+c_{k-1}(x^{k-1}-x^k).$$
Then, the followings hold
\begin{enumerate}
  \item[(i)] The sequence $\{F_\epsilon(x^k)\}_{k\in\mathbb{N}}$  is nonincreasing and in particular
  \begin{equation}\label{seqp1}
 \rho_1 \|x^{k+1}-x^k\|_2^2\leq F_\epsilon(x^k)-F_\epsilon(x^{k+1}), \forall k=0,1, \cdots,
\end{equation}
where $\rho_1=\frac{(\gamma-1)L_s}{2}.$
  \item[(ii)] We have  \begin{equation}
 \sum_{i=1}^\infty \|x^{k+1}-x^k\|^2_2\leq \infty
\end{equation}
and hence $\lim_{k\rightarrow  \infty}(x^{k+1}-x^k)=0.$

\item[(iii)] $w^k\in \partial F_\epsilon(x^k)$ and $\|w^k\|_2\leq \rho_2\|x^k-x^{k-1}\|_2, \forall k\geq 0$ where $\rho_2=(\gamma+1)L_s+(\frac{\gamma}{2\epsilon}+1)\|L_p^\tau\|_1$.
\end{enumerate}
\end{lemma}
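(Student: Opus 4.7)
The plan is to establish (i), (ii), and (iii) in sequence, following the three-step template outlined just above the statement.

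For (i), the key observation is that the $y$-update \eqref{yk} performs \emph{exact} minimization of $\Psi(x^{k+1}, \cdot)$: differentiating $\Psi(x^{k+1}, y)$ in each $y_i$ and equating to zero gives $y^{k+1}_i = 1/(2 p_i(x^{k+1}))$, which lies in $\Lambda$ and attains $\Psi(x^{k+1}, y^{k+1}) = F_\epsilon(x^{k+1})$. Consequently, $\Psi(x^k, y^k) = F_\epsilon(x^k)$ at every iterate (assuming $y^0$ is chosen compatibly with $x^0$ from the start). The $x$-update is a proximal gradient step of exactly the form covered by Lemma \ref{PSD}, applied to the smooth function $h(\cdot) = H(\cdot, y^k)$ and the nonsmooth $\sigma = f$. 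By Corollary \ref{cor1}, $h$ is $L(\tau, y^k)$-locally-gradient-Lipschitz on $\mathcal{B}(\tau)$, and $c_k = \gamma L(\tau, y^k) > L(\tau, y^k)$ satisfies the hypothesis of that lemma (the boundedness $x^k, x^{k+1} \in \mathcal{B}(\tau)$ is essential here). Adding $g(y^k)$ to both sides of the descent inequality and invoking $\Psi(x^{k+1}, y^{k+1}) \leq \Psi(x^{k+1}, y^k)$ produces $F_\epsilon(x^{k+1}) \leq F_\epsilon(x^k) - \tfrac{(\gamma-1) L(\tau, y^k)}{2} \|x^{k+1} - x^k\|_2^2$, and the stated $\rho_1 = (\gamma-1)L_s/2$ follows since $L(\tau, y^k) \geq L_s$.

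For (ii), I will simply telescope (i) over $k = 0, \ldots, K$ to obtain $\rho_1 \sum_{k=0}^K \|x^{k+1} - x^k\|_2^2 \leq F_\epsilon(x^0) - F_\epsilon(x^{K+1}) \leq F_\epsilon(x^0) - \inf F_\epsilon$, which is finite by the standing assumption $\inf F_\epsilon > -\infty$. Letting $K \to \infty$ gives summability, whence $\|x^{k+1}-x^k\|_2 \to 0$.

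For (iii), the subgradient $w^k$ arises from first-order optimality of the prox step that produced $x^k$: $0 \in \partial f(x^k) + \nabla_x H(x^{k-1}, y^{k-1}) + c_{k-1}(x^k - x^{k-1})$. The crucial algebraic identity is that since $y^k_i = 1/(2 p_i(x^k))$, direct computation gives $\nabla_x H(x^k, y^k) = \nabla s(x^k) + \sum_i \nabla p_i(x^k)$, i.e., the smooth gradient piece of $\partial F_\epsilon$ at $x^k$. Adding this vector into the inclusion produces an element of $\partial F_\epsilon(x^k)$ that, up to notation, matches the $w^k$ of the statement. To bound $\|w^k\|_2$, I will split the $\nabla_x H$ difference along its two arguments: the $x$-variation $\|\nabla_x H(x^k, y^k) - \nabla_x H(x^{k-1}, y^k)\|_2 \leq L(\tau, y^k) \|x^k - x^{k-1}\|_2$ by Corollary \ref{cor1}, while the $y$-variation is handled using the closed-form identity $y^k_i - y^{k-1}_i = (p_i(x^{k-1}) - p_i(x^k))/(2 p_i(x^k) p_i(x^{k-1}))$ together with $|p_i(u)-p_i(v)| \leq \|B_i\|\|u-v\|_2$ from \eqref{hLip}, the uniform lower bound $p_i \geq \epsilon$, and $\|B_i^T(B_i x - c_i)\|_2 \leq \|B_i\| p_i(x)$; these inputs collapse the $y$-piece to a multiple of $\|L_p^\tau\|_1 \|x^k - x^{k-1}\|_2$. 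Combining everything with $c_{k-1} \leq \gamma(L_s + \|L_p^\tau\|_1/(2\epsilon))$ (using $\|y^{k-1}\|_\infty \leq 1/(2\epsilon)$) and collecting coefficients yields the bound with $\rho_2 = (\gamma+1)L_s + (\tfrac{\gamma}{2\epsilon}+1)\|L_p^\tau\|_1$.

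Parts (i) and (ii) are essentially routine once one sees that the exact $y$-minimization makes $\Psi(x^k, y^k) = F_\epsilon(x^k)$. The main obstacle is the constant bookkeeping in (iii): splitting the Lipschitz difference of $\nabla_x H$ between its $x$- and $y$-arguments, converting the $y$-piece back into $\|x^k - x^{k-1}\|_2$ via the explicit formula \eqref{yk} and Lemma \ref{pLip}, and arranging all contributions so that the final coefficient matches the stated $\rho_2$ rather than a looser constant.
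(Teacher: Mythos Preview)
Your proposal for parts (i) and (ii), and for the inclusion $w^k\in\partial F_\epsilon(x^k)$ in part (iii), follows the paper's proof essentially line for line: same descent lemma, same use of the exact $y$-minimization to identify $\Psi(x^k,y^k)=F_\epsilon(x^k)$, same telescoping, and same optimality-plus-identity argument for the subgradient.

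The one substantive difference is in how you bound $\|w^k\|_2$. You propose to split $\nabla_xH(x^k,y^k)-\nabla_xH(x^{k-1},y^{k-1})$ into an $x$-variation (handled by Corollary~\ref{cor1}) and a $y$-variation (handled via the explicit formula for $y^k_i-y^{k-1}_i$ and the estimates from Lemma~\ref{pLip}). This works, but it is more laborious than necessary, and the coefficient you obtain this way will in general exceed the stated $\rho_2$: already $c_{k-1}+L(\tau,y^k)\le(\gamma+1)\bigl(L_s+\tfrac{1}{2\epsilon}\|L_p^\tau\|_1\bigr)$ before you add the $y$-piece. The paper avoids the split entirely by observing that the identity you already noted, $\nabla_xH(x^j,y^j)=\nabla s(x^j)+\sum_i\nabla p_i(x^j)$, holds at \emph{both} $j=k$ and $j=k-1$. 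Hence the whole difference collapses to $\nabla h(x^k)-\nabla h(x^{k-1})$ with $h=s+\sum_i p_i$, and one applies the Lipschitz bounds on $\nabla s$ and $\nabla p_i$ directly to get $(c_{k-1}+L_s+\|L_p^\tau\|_1)\|x^k-x^{k-1}\|_2$, which matches $\rho_2$ exactly after bounding $c_{k-1}$. Your route is valid for establishing the lemma with \emph{some} constant (which is all the downstream theorem needs), but the paper's route is shorter and recovers the precise $\rho_2$.
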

\begin{proof}
\textit{(i)}  Since $H(\cdot, y^k)$ is $L(\tau, y^k)$-locally-gradient-Lipschitz on $\mathcal{B}(\tau)$ from Corollary \ref{cor1}, applying Lemma \ref{PSD} with $h(\cdot)=H(\cdot, y^k), \sigma(x)=f(x), t=c_k>L(\tau, y^k)$ and using the first iterative step \eqref{xk} in PL-IRLS, we obtain that
\begin{align}
 H(x^{k+1}, y^k) +f(x^{k+1})&\leq H(x^k, y^k) +f(x^k)-\frac{1}{2}(c_k-L(\tau, y^k))\|x^{k+1}-x^k\|_2^2\nonumber\\
  &= H(x^k, y^k) +f(x^k)-\frac{\gamma-1}{2}(L_s+\|L_p^\tau\|_1\|y^k\|_\infty)\|x^{k+1}-x^k\|_2^2\label{iter1r}.
\end{align}
From the second iterative step \eqref{yk}, we get that
\begin{equation}\label{iter2r}
 H(x^{k+1}, y^{k+1}) +g(x^{k+1})\leq H(x^{k+1}, y^k) +g(x^k),
\end{equation}
and $\|y^k\|_\infty \leq \frac{1}{2\epsilon}$. By \eqref{iter1r} and \eqref{iter2r}, we thus get that for all $k\geq 0$,
\begin{align}\label{iter12r}
\Psi(x^k, y^k)-\Psi(x^{k+1}, y^{k+1})& = H(x^k, y^k) +f(x^k) -H(x^{k+1}, y^{k+1}) -g(x^{k+1}) \nonumber\\
&\geq \frac{\gamma-1}{2}(L_s+\|L_p^\tau\|_1\|y^k\|_\infty)\|x^{k+1}-x^k\|_2^2 \nonumber\\
& \geq \frac{(\gamma-1)L_s}{2}\|x^{k+1}-x^k\|_2^2=\rho_1 \|x^{k+1}-x^k\|_2^2.
\end{align}
It remains to show that $\Psi(x^k, y^k)=F_\epsilon(x^k)$ for all $k\geq 0$. Indeed, we have that
\begin{align}
\Psi(x^k, y^k)&=f(x^k)+H(x^k,y^k)+g(y^k) \nonumber\\
& =f(x^k)+s(x^k)+\sum_{i=1}^m(\|B_ix^k-c_i\|_2^2+\epsilon^2)y_i^k+\sum_{i=1}^m\frac{1}{4y_i^k} \nonumber\\
& =f(x^k)+s(x^k)+\sum_{i=1}^m\left[\frac{\|B_ix^k-c_i\|_2^2+\epsilon^2}{2\sqrt{\|B_ix^k-c_i\|_2^2+\epsilon^2}}
+\frac{1}{2}\sqrt{\|B_ix^k-c_i\|_2^2+\epsilon^2}\right]
\nonumber\\
& =f(x^k)+s(x^k)+\sum_{i=1}^m \sqrt{\|B_ix^k-c_i\|_2^2+\epsilon^2}=F_\epsilon(x^k),
\end{align}
where we implicitly used the fact that $\delta(y^k, \Lambda)=0$ since $\|y^k\|_\infty \leq \frac{1}{2\epsilon}$ and $y^k_i>0$.

\textit{(ii)} Summing up \eqref{seqp1} from $k=0$ to $N-1$, we obtain that
$$ \sum_{i=1}^N \|x^{k+1}-x^k\|^2_2\leq \rho_1(F_\epsilon(x^0)-F_\epsilon(x^N))\leq \rho_1(F_\epsilon(x^0)-\inf F_\epsilon).$$
Taking $N\rightarrow \infty$ ,we get the desired assertion.

\textit{(iii)}On one hand, by \eqref{xk} and the definition of proximal map, we have that
\begin{equation}
x^{k}\in\arg\min_x\left( f(x)+\langle x-x^{k-1},\nabla_xH(x^{k-1}, y^{k-1})\rangle + \frac{c_{k-1}}{2}\|x-x^{k-1}\|_2^2\right).
\end{equation}
Writing down the optimality conditions yields
\begin{equation}
 u^k=c_{k-1}(x^{k-1}-c^{k-1})-\nabla_xH(x^{k-1}, y^{k-1}),
\end{equation}
where $u^k\in f(x^k)$.  On the other hand, it is clear to see that
\begin{align}
\nabla_xH(x^k,y^k)&=\nabla s(x^k)+\sum_{i=1}^m2B^T_i(B_ix^k-c_i)y^k_i \nonumber\\
& =\nabla s(x^k)+\sum_{i=1}^m \frac{B^T_i(B_ix^k-c_i)}{\sqrt{\|B_ix^k-c_i\|_2^2+\epsilon^2}} \nonumber\\
&=\nabla s(x^k)+\sum_{i=1}^m \nabla p_i(x^k),
\end{align}
which implies that $\nabla_xH(x^k, y^k)+u^k\in \partial F_\epsilon(x^k)$. Thus,
$$w^k=\nabla_xH(x^k,y^k)-\nabla_xH(x^{k-1},y^{k-1})+c_{k-1}(x^{k-1}-x^k)\in \partial F_\epsilon(x^k).$$
Finally, let us bound $\|w^k\|_2$ as follows
\begin{align}
\|w^k\|_2&=\|\nabla_xH(x^k,y^k)-\nabla_xH(x^{k-1},y^{k-1})+c_{k-1}(x^{k-1}-x^k)\|_2 \nonumber\\
& \leq c_{k-1}\|x^{k-1}-x^k\|_2+\|\nabla s(x^k)+\sum_{i=1}^m\nabla p_i(x^k)-\nabla s(x^{k-1})-\sum_{i=1}^m\nabla p_i(x^{k-1})\|_2 \nonumber\\
&\leq c_{k-1}\|x^{k-1}-x^k\|_2+\|\nabla s(x^k)-\nabla s(x^{k-1})\|_2+\sum_{i=1}^m\|\nabla p_i(x^k)-\nabla p_i(x^{k-1})\|_2 \nonumber\\
&\leq (c_{k-1}+L_s+ \sum_{i=1}^m L_i^\tau )\|x^{k-1}-x^k\|_2.
\end{align}
Since $c_{k-1}=\gamma L(\tau, y^{k-1})=\gamma (L_s+\|L_p^\tau\|_1\|y^{k-1}\|_\infty)\leq \gamma(L_s+\frac{\|L_p^\tau\|_1}{2\epsilon})$ and $\sum_{i=1}^m L_i^\tau=\|L_p^\tau\|_1$, we have that $$\|w^k\|_2\leq \left((\gamma+1)L_s+(\frac{\gamma}{2\epsilon}+1)\|L_p^\tau\|_1\right)\|x^{k-1}-x^k\|_2=\rho_2\|x^{k-1}-x^k\|_2.$$
This completes the proof.
\end{proof}

In the second one, we establish some results about the limit points of the sequence generated by PL-IRLS. Thereby, we define that $w(x^0):=\{u\in\RR^n: \exists~\textrm{an increasing sequence of integers}~\{k_j\}_{j\in\mathbb{N}}~\textrm{such that} ~x^{k_j}\rightarrow u ~\textrm{as}~ j\rightarrow \infty\}$, where $x^0\in\RR^n$ is an arbitrary starting point.
\begin{lemma}[Properties of the limit point set]\label{PLP}
 Let $\{x^k\}_{k\in\mathbb{N}}$ be a sequence generated by PL-IRLS and assume that $\inf F_\epsilon>-\infty$ and there exists a constant $\tau$ big enough such that $x^k\in \mathcal{B}(\tau), k=0, 1, \cdots.$ Then, the followings hold

 \begin{enumerate}
   \item[(i)] $\emptyset\neq w(x^0)\subset\textrm{crit}(F)$.
   \item[(ii)] $\lim_{k\rightarrow \infty} \textrm{dist}(x^k, w(x^0))=0$.
   \item[(iii)] $w(x^0)$ is a nonempty, compact, and connected set.
   \item[(iv)] $F_\epsilon(x)$ is finite and constant on $w(x^0)$.
 \end{enumerate}
\end{lemma}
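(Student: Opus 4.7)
The plan is to prove the four parts in the order (i), (iv), (ii), (iii); the argument for (i) yields (iv) as a by-product, and (ii) and (iii) are then standard compactness facts. Non-emptiness of $w(x^0)$ is immediate because $\{x^k\}\subset\mathcal{B}(\tau)$ is bounded, so Bolzano-Weierstrass produces a convergent subsequence. The \emph{only} substantive obstacle is the continuity-along-subsequences statement $F_\epsilon(x^{k_j})\to F_\epsilon(u)$ whenever $x^{k_j}\to u$, which is needed both to close the subdifferential graph (for (i)) and to identify a common value of $F_\epsilon$ on $w(x^0)$ (for (iv)). Without it the lemma does not go through, because the nonsmooth block $f$ is only lower semicontinuous.

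For the critical-point claim in (i), I would fix $u\in w(x^0)$ and a subsequence $x^{k_j}\to u$. Lemma \ref{PIS}(iii) provides $w^k\in\partial F_\epsilon(x^k)$ with $\|w^k\|_2\le\rho_2\|x^k-x^{k-1}\|_2$, and Lemma \ref{PIS}(ii) forces $w^{k_j}\to 0$. To apply the closedness property of the limiting subdifferential recorded in Section 2, I verify $F_\epsilon(x^{k_j})\to F_\epsilon(u)$ by a two-sided squeeze. The lower bound $\liminf_j F_\epsilon(x^{k_j})\ge F_\epsilon(u)$ comes for free from lsc of $f$ and continuity of $s$ and of the $p_i$. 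For the matching upper bound, I exploit the fact that $x^{k_j}$ is, by \eqref{xk}, a minimizer of $f(\cdot)+\langle \cdot-x^{k_j-1},\nabla_xH(x^{k_j-1},y^{k_j-1})\rangle+\tfrac{c_{k_j-1}}{2}\|\cdot-x^{k_j-1}\|_2^2$; testing this with $x=u$ yields
\begin{equation*}
f(x^{k_j})\le f(u)+\langle u-x^{k_j},\nabla_xH(x^{k_j-1},y^{k_j-1})\rangle+\tfrac{c_{k_j-1}}{2}\bigl(\|u-x^{k_j-1}\|_2^2-\|x^{k_j}-x^{k_j-1}\|_2^2\bigr).
\end{equation*}
Now $x^{k_j-1}\to u$ by Lemma \ref{PIS}(ii); the weights $y^{k_j-1}$ remain in a compact subset of $(0,\infty)^m$, since $\|y^k\|_\infty\le\tfrac{1}{2\epsilon}$ and $y_i^k=\tfrac{1}{2p_i(x^k)}$ is bounded below away from $0$ because $x^k\in\mathcal{B}(\tau)$; and $\nabla_xH$ is continuous on bounded sets, so $c_{k_j-1}=\gamma L(\tau,y^{k_j-1})$ is also bounded. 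Hence the right-hand side tends to $f(u)$, giving $\limsup_j f(x^{k_j})\le f(u)$. Combined with the lsc bound this forces $f(x^{k_j})\to f(u)$, and continuity of $s$ and of the $p_i$ then delivers $F_\epsilon(x^{k_j})\to F_\epsilon(u)$. Closedness of $\partial F_\epsilon$ finally yields $0\in\partial F_\epsilon(u)$, so $u\in\mathrm{crit}(F_\epsilon)$.

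Part (iv) is nearly free: Lemma \ref{PIS}(i) says $\{F_\epsilon(x^k)\}$ is nonincreasing, and it is bounded below by $\inf F_\epsilon$, hence converges to some $F^*$; the preceding continuity argument forces $F_\epsilon(u)=F^*$ for every $u\in w(x^0)$. Part (ii) is the standard compactness contradiction: if $\mathrm{dist}(x^k,w(x^0))$ did not vanish, boundedness would extract a subsequence bounded away from $w(x^0)$ with a further convergent subsubsequence whose limit would lie in $w(x^0)$ by definition. For part (iii), compactness follows from $w(x^0)=\bigcap_{N\ge 0}\overline{\{x^k:k\ge N\}}$, a nested intersection of closed bounded sets; connectedness is the classical Ostrowski-type consequence of $\|x^{k+1}-x^k\|_2\to 0$ (Lemma \ref{PIS}(ii)), which rules out alternation of the tail of $\{x^k\}$ between two separated closed pieces of $w(x^0)$.

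The hardest step remains the upper bound on $f(x^{k_j})$ in part (i); the proximal definition of the iterate plays an essential role there through the test point $x=u$, and every other piece of the lemma is comparatively routine once boundedness of $\{x^k\}$ is in hand.
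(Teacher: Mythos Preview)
Your proposal is correct and follows essentially the same route as the paper: for (i) you use lower semicontinuity of $f$ for the $\liminf$ bound and test the proximal subproblem at the limit point $u$ to obtain the matching $\limsup$ bound on $f(x^{k_j})$, then invoke closedness of $\partial F_\epsilon$ together with $w^{k_j}\to 0$; for (iv) you combine monotone convergence of $\{F_\epsilon(x^k)\}$ with the subsequence continuity just established. The only difference is that for (ii) and (iii) you write out the standard compactness/Ostrowski arguments explicitly, whereas the paper simply cites Remark~5 of \cite{bolte2013proximal}; your direct arguments are fine and amount to the same content.
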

\begin{proof}
\textit{(i)}
Let $x^*$ be a limit point of $\{x^k\}_{k\in\mathbb{N}}$. This means that there is a subsequence $\{x^{k_j}\}_{i\in\mathbb{N}}$ such that $x^{k_j}\rightarrow x^*$ as $j\rightarrow \infty$. Since $f(x)$ is lower semicontinuous, we obtain that
\begin{equation}\label{inf}
 \lim_{j\rightarrow \infty}\inf f(x^{k_j})\geq f(x^*).
\end{equation}
Recall that
\begin{equation}
x^{k+1}\in\arg\min_x\left( f(x)+\langle x-x^k,\nabla_xH(x^k, y^k)\rangle + \frac{c_k}{2}\|x-x^k\|_2^2\right).
\end{equation}
Thus, letting $x=x^*$ in the above, we obtain that
\begin{align}
&f(x^{k+1})+\langle x^{k+1}-x^k,\nabla_xH(x^k, y^k)\rangle + \frac{c_k}{2}\|x^{k+1}-x^k\|_2^2 \nonumber\\
 \leq &f(x^*)+\langle x^*-x^k,\nabla_xH(x^k, y^k)\rangle + \frac{c_k}{2}\|x^*-x^k\|_2^2.
\end{align}
Choosing $k=k_j-1$ above and letting $j$ tend to $\infty$, we have that
\begin{align}
 \lim_{i\rightarrow \infty}\sup f(x^{k_j}) & \leq \lim_{i\rightarrow \infty}\sup(\langle x^*-x^{k_j-1},\nabla_xH(x^{k_j-1}, y^{k_j-1})\rangle \nonumber\\
&+ \frac{c_{k_j-1}}{2}\|x^*-x^{k_j-1}\|_2^2+f(x^*)).
\end{align}
Since $\|x^{k_j}-x^{k_j-1}\|_2\rightarrow 0$ and $x^{k_j}\rightarrow x^*$ as $j\rightarrow \infty$ , we get that $x^{k_j-1}\rightarrow x^*$ as $j\rightarrow \infty$. Thus,
\begin{equation}\label{sup}
 \lim_{j\rightarrow \infty}\sup f(x^{k_j})\leq f(x^*).
\end{equation}
Combining \eqref{inf} and \eqref{sup} yields $\lim_{j\rightarrow\infty}f(x^{k_j})=f(x^*)$. Furthermore, we have that
\begin{align}
\lim_{j\rightarrow\infty}F_\epsilon(x^{k_j})  =& \lim_{j\rightarrow\infty}f(x^{k_j})+s(x^{k_j})+\sum_{i=1}^m\sqrt{\|B_ix^{k_j}-c_i\|_2^2+\epsilon^2} \nonumber\\
=&f(x^*)+s(x^*)+\sum_{i=1}^m\sqrt{\|B_ix^*-c_i\|_2^2+\epsilon^2}=F_\epsilon(x^*).
\end{align}
 By Lemma \ref{PIS}, we have that $w^{k_j}\in \partial F_\epsilon(x^k)$ and $w^{k_j}\rightarrow 0$ as $j\rightarrow \infty$. Together with that $\lim_{j\rightarrow\infty}F_\epsilon(x^{k_j})=F_\epsilon(x^*)$, the closedness property of $\partial F_\epsilon(x)$ implies that $0\in \partial F_\epsilon(x^*)$. This proves that $x^*\in \textrm{crit} F_\epsilon$.

\textit{ (ii)} and \textit{(iii) }follows from the fact that $\lim_{k\rightarrow  \infty}(x^{k+1}-x^k)=0$ proved in Lemma \ref{PIS} and Remark 5 in \cite{bolte2013proximal}.

 \textit{(iv)} Since the sequence $\{F_\epsilon(x^k)\}_{k\in\mathbb{N}}$  is nonincreasing and has a finite lower bound $\inf F_\epsilon$ , it must converge to a point, denoted by $c$ which is a finite constant. Take $x^*\in w(x^0)$. Then there is a subsequence $\{x^{k_j}\}_{i\in\mathbb{N}}$ such that $x^{k_j}\rightarrow x^*$ as $j\rightarrow \infty$. On one hand, it holds that  $c=\lim_{j\rightarrow \infty}F_\epsilon(x^{k_j})$ since $\{F_\epsilon(x^k)\}_{k\in\mathbb{N}}$ nonincreasely converges to $c$; On the other hand, we have shown that  $\lim_{j\rightarrow\infty}F_\epsilon(x^{k_j})=F_\epsilon(x^*)$. So $F_\epsilon(x^*)=c$. This completes the proof.
\end{proof}

\subsection{Convergence to a critical point}
Now, we are in a position to prove the main result.
\begin{theorem}[Main Result]\label{main1}
Suppose that $F_\epsilon(x)$ is a KL function with $\inf F_\epsilon>-\infty$. Let $\{x^k\}_{i\in\mathbb{N}}$ be a sequence generated by PL-IRLS and assume that there exists a constant $\tau$ big enough such that $x^k\in \mathcal{B}(\tau)$, for all $k\geq 0.$
\begin{enumerate}
  \item[(i)] The sequence $\{x^k\}_{i\in\mathbb{N}}$ has finite length, that is,
\begin{equation}
 \sum_{k=1}^\infty \|x^{k+1}-x^k\|_2<\infty.
\end{equation}
  \item[(ii)] The sequence $\{x^k\}_{i\in\mathbb{N}}$ converges to a critical point $x^*$ of $F_\epsilon$.
\end{enumerate}
\end{theorem}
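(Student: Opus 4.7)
The plan is to follow the now-standard Kurdyka-{\L}ojasiewicz-based convergence framework, assembling the three ingredients already in hand: the sufficient decrease inequality $\rho_1\|x^{k+1}-x^k\|_2^2\le F_\epsilon(x^k)-F_\epsilon(x^{k+1})$ and the relative error bound $\|w^k\|_2\le \rho_2\|x^k-x^{k-1}\|_2$ with $w^k\in\partial F_\epsilon(x^k)$ from Lemma \ref{PIS}, together with the limit-set description from Lemma \ref{PLP}. In particular, Lemma \ref{PLP}(iv) tells us that $F_\epsilon$ is identically equal to some finite constant $c$ on the compact set $w(x^0)$, and since $\{F_\epsilon(x^k)\}$ is monotone decreasing with this limit value, $F_\epsilon(x^k)\downarrow c$.

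First I would dispose of a trivial case: if $F_\epsilon(x^{k_0})=c$ for some index $k_0$, then monotonicity forces $F_\epsilon(x^k)=c$ for all $k\ge k_0$, and the sufficient decrease property immediately gives $x^{k+1}=x^k$ for all $k\ge k_0$, so the sequence is eventually constant and both (i) and (ii) follow (with $x^*=x^{k_0}\in\operatorname{crit}(F_\epsilon)$ by Lemma \ref{PLP}(i)). So I may assume $F_\epsilon(x^k)>c$ for every $k$. Since $\operatorname{dist}(x^k,w(x^0))\to 0$ by Lemma \ref{PLP}(ii) and $w(x^0)$ is compact, I can apply the Uniformized KL Property (Lemma \ref{UKL}) to $\Omega=w(x^0)$ to obtain $\zeta,\eta>0$ and $\varphi\in\Phi_\eta$ such that, for all $k$ beyond some index $k_1$,
\begin{equation*}
\varphi'(F_\epsilon(x^k)-c)\,\operatorname{dist}(0,\partial F_\epsilon(x^k))\ge 1.
\end{equation*}
Combined with $w^k\in\partial F_\epsilon(x^k)$ and the subgradient bound, this yields $\varphi'(F_\epsilon(x^k)-c)\ge \tfrac{1}{\rho_2\|x^k-x^{k-1}\|_2}$.

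Next, I would exploit the concavity of $\varphi$. Setting $\Delta_k:=\varphi(F_\epsilon(x^k)-c)-\varphi(F_\epsilon(x^{k+1})-c)\ge 0$, concavity gives $\Delta_k\ge \varphi'(F_\epsilon(x^k)-c)(F_\epsilon(x^k)-F_\epsilon(x^{k+1}))$; chaining with sufficient decrease and the KL estimate produces
\begin{equation*}
\|x^{k+1}-x^k\|_2^2 \le \frac{\rho_2}{\rho_1}\Delta_k\,\|x^k-x^{k-1}\|_2.
\end{equation*}
Taking square roots and applying the AM-GM inequality $\sqrt{ab}\le \tfrac12(a+b)$ yields the key recursion
\begin{equation*}
2\|x^{k+1}-x^k\|_2 \le \|x^k-x^{k-1}\|_2 + \frac{\rho_2}{\rho_1}\Delta_k.
\end{equation*}
Summing from $k=k_1$ to any $N$ partially telescopes the left side and fully telescopes the right, producing
\begin{equation*}
\sum_{k=k_1}^N \|x^{k+1}-x^k\|_2 \le \|x^{k_1}-x^{k_1-1}\|_2 + \frac{\rho_2}{\rho_1}\varphi(F_\epsilon(x^{k_1})-c),
\end{equation*}
a bound independent of $N$. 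This establishes (i). Finite length implies $\{x^k\}$ is Cauchy in $\RR^n$, hence convergent to some $x^*$; by Lemma \ref{PLP}(i), $x^*\in w(x^0)\subset\operatorname{crit}(F_\epsilon)$, giving (ii).

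The main obstacle is the bookkeeping required to invoke Lemma \ref{UKL} uniformly along the tail of the sequence: I must verify that for all sufficiently large $k$ the iterate $x^k$ lies simultaneously in $\{u:\operatorname{dist}(u,w(x^0))<\zeta\}$ and in the level strip $[c<F_\epsilon<c+\eta]$. The spatial condition is immediate from Lemma \ref{PLP}(ii); the level condition uses the strict inequality $F_\epsilon(x^k)>c$ (the nontrivial case handled above) together with $F_\epsilon(x^k)\downarrow c$. Once these inclusions are secured simultaneously past some $k_1$, the recursive inequality above delivers the remainder of the argument mechanically.
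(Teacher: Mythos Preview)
Your proposal is correct and follows essentially the same argument as the paper: apply the uniformized KL property on the compact limit set $w(x^0)$ (on which $F_\epsilon$ is constant), combine with the sufficient decrease and subgradient bounds from Lemma \ref{PIS}, use concavity of $\varphi$ and AM--GM to obtain the recursion $2\|x^{k+1}-x^k\|_2\le \|x^k-x^{k-1}\|_2+\tfrac{\rho_2}{\rho_1}\Delta_k$, then telescope. Your treatment of the trivial case and of the bookkeeping needed to enter the KL region matches the paper's proof as well.
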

With Lemmas \ref{UKL}, \ref{PIS}, and \ref{PLP} at hand, this theorem can be proved in a same way as in the proof of Theorem 1 in \cite{bolte2013proximal}. For completeness, we provide a short proof here.

\begin{proof}
\textit{(i)} The upcoming arguments heavily rely on Lemma \ref{UKL} with $\Omega:=w(x^0), \sigma:= F$. We begin with any point $\bar{u}\in w(x^0)$. Then, there exists an increasing sequences of integers $\{k_j\}_{j\in\mathbb{N}}$ such that $x^{k_j}\rightarrow \bar{u}$ as $j\rightarrow \infty$. Repeating the arguments in the proof of Lemma \ref{PLP} \textit{(iv)}, we get that
\begin{equation}
 \lim_{j\rightarrow\infty}F_\epsilon(x^{k_j})= \lim_{k\rightarrow\infty}F_\epsilon(x^k)= F_\epsilon(\bar{u}).
\end{equation}
Since $\{F_\epsilon(x^k)\}$ is nonincreasing and has a finite lower bound, if there exists an integer $\bar{k}$ such that $F_\epsilon(x^{\bar{k}})=F_\epsilon(\bar{u})$, then $F_\epsilon(x^k)\equiv F_\epsilon(\bar{u})$ for $k\geq \bar{k}$ which implies that $x^k\equiv x^{\bar{k}}$ for $k\geq \bar{k}$ from Lemma \ref{PIS} \textit{(i)}. In this case, the theorem holds obviously. For other cases, we assume that $F_\epsilon(x^k)>F_\epsilon(\bar{u})$, for all $k>0$. Since $\lim_{k\rightarrow\infty}F_\epsilon(x^k)= F_\epsilon(\bar{u})$, for any $\eta>0$ there must exist an integer $\hat{k}>0$ such that $F_\epsilon(x^k)<F_\epsilon(\bar{u})+\eta$ for all $k>\hat{k}$. Similarly, $\lim_{k\rightarrow \infty} \textrm{dist}(x^k, w(x^0))=0$ implies for any $\zeta>0$ there must exist an integer $\widetilde{k}>0$ such that $\textrm{dist}(x^k, w(x^0))<\zeta$ for all $k>\widetilde{k}$. Based on the discussion above, we obtain that for all $k>l:=\max\{\hat{k},\widetilde{k}\}$,
 \begin{equation}
 x^k\in \{u\in\RR^n:\textrm{dist}(u,\Omega)<\zeta\}\bigcap [F_\epsilon(\bar{u})<F_\epsilon(u)<F_\epsilon(\bar{u})+\eta].
\end{equation}
Thus, applying Lemma \ref{UKL} yields that for all $k>l$,
 \begin{equation}
  \varphi^{'}(F_\epsilon(x^k)-F_\epsilon(\bar{u}))\textrm{dist}(0,\partial F_\epsilon(x^k))\geq 1.
\end{equation}
By the definition of $\textrm{dist}(\cdot,\cdot)$ and $w^k\in \partial F_\epsilon(x^k)$ and Lemma \ref{PIS} \textit{(iii)}, we get that
\begin{equation}
 \textrm{dist}(0,\partial F_\epsilon(x^k))\leq \|w^k\|_2\leq \rho_2\|x^k-x^{k-1}\|_2.
\end{equation}
Hence,
\begin{equation}\label{bound1}
 \varphi^{'}(F_\epsilon(x^k)-F_\epsilon(\bar{u}))\geq \rho^{-1}_2\|x^k-x^{k-1}\|_2^{-1}.
\end{equation}
By the concavity of $\varphi$ and \eqref{bound1} and Lemma \ref{PLP} \textit{(i)}, we derive that
\begin{align}
&\varphi(F_\epsilon(x^k)-F_\epsilon(\bar{u}))-\varphi(F_\epsilon(x^{k+1})-F_\epsilon(\bar{u}))\nonumber\\
\geq & \varphi^{'}(F_\epsilon(x^k)-F_\epsilon(\bar{u}))(F_\epsilon(x^k)-F_\epsilon(x^{k+1}))\nonumber\\
\geq &\frac{F_\epsilon(x^k)-F_\epsilon(x^{k+1})}{\rho_2\|x^k-x^{k-1}\|_2}\geq \frac{\rho_1\|x^{k+1}-x^k\|_2^2}{\rho_2\|x^k-x^{k-1}\|_2}.
\end{align}
Define $\Delta_{s,t}:=\varphi(F_\epsilon(x^s)-F_\epsilon(\bar{u}))-\varphi(F_\epsilon(x^t)-F_\epsilon(\bar{u}))$ and $c:=\frac{\rho_2}{\rho_1}$. We obtain that
\begin{align}
\|x^{k+1}-x^k\|_2^2 \leq &c \cdot\Delta_{k,k+1}\|x^k-x^{k-1}\|_2\nonumber\\
\leq & \left(\frac{\|x^k-x^{k-1}\|_2+c \Delta_{k,k+1}}{2}\right)^2
\end{align}
i.e., $2\|x^{k+1}-x^k\|_2\leq \|x^k-x^{k-1}\|_2+c \cdot\Delta_{k,k+1}$ for all $k>l$. Summing up it for $i=l+1, \cdots, k$, we get that
\begin{align}
2\sum_{i=l+1}^k \|x^{i+1}-x^i\|_2 & \leq \sum_{i=l+1}^k\|x^i-x^{i-1}\|_2+c \sum_{i=l+1}^k \Delta_{i,i+1} \nonumber\\
 & \leq \sum_{i=l+1}^k\|x^{i+1}-x^i\|_2+ \|x^{l+1}-x^l\|_2+c \sum_{i=l+1}^k \Delta_{i,i+1} \nonumber\\
 & = \sum_{i=l+1}^k\|x^{i+1}-x^i\|_2+ \|x^{l+1}-x^l\|_2+c\cdot \Delta_{l+1,k+1}.
\end{align}
Note that $\varphi\geq 0$, it thus holds for any $k>l$,
$$\sum_{i=l+1}^k \|x^{i+1}-x^i\|_2\leq \|x^{l+1}-x^l\|_2+c\cdot \varphi(F_\epsilon(x^{l+1})-F_\epsilon(\bar{u})).$$
It implies that the sequence $\{x^k\}_{i\in\mathbb{N}}$ has finite length.

\textit{(ii)} $\sum_{k=1}^\infty \|x^{k+1}-x^k\|_2<\infty$ implies that $\{x^k\}$ is a Cauchy sequence and hence it is a convergent sequence. By Lemma \ref{PLP} (i), its limit point, denoted by $x^*$, belongs to $\textrm{crit}(F_\epsilon)$. This completes the proof.
\end{proof}

\section{Extension}
 Our method can be extended to solving the following more general nonconvex and nonsmooth problems:
\begin{equation*}
(GM)\quad \quad \Min f(x)+s(x)+\sum_{i=1}^m\|B_ix-c_i\|_2^\nu,
\end{equation*}
where the setting of $f(x), s(x), B_i, c_i, m$ is the same as that in (M), and $0<\nu\leq 1$ is a new generalized parameter. We can take the following problem
 \begin{equation*}
(GM_\epsilon)\quad \quad \Min F_{\epsilon,\nu}(x):=f(x)+s(x)+\sum_{i=1}^m (\|B_ix-c_i\|_2^2+\epsilon^2)^{\frac{\nu}{2}}
\end{equation*}
as a smoothed approximation and
\begin{equation*}
(GAMs)\quad\quad \Min \Psi(x, y)=f(x)+\underbrace{s(x)+\sum_{i=1}^m(\|B_ix-c_i\|_2^2+\epsilon^2)y_i}_{H(x,y)}+\underbrace{\sum_{i=1}^m \frac{\kappa}{y_i^\theta}+\delta(y, \overline{\Lambda})}_{g(y)}
\end{equation*}
as an auxiliary problem, where
\begin{equation}\label{para}\theta=\frac{\nu}{2-\nu},~~\overline{\Lambda}=(0, \frac{\nu}{2\epsilon^{2-\nu}}]^m, \kappa=\left(\frac{\nu}{2}\right)^\frac{2}{2-\nu}.
 \end{equation}
 It is easy to see that when $\nu=1$, we return to the problems (M), ($M_\epsilon$), and (AMs) respectively. PL-IRLS for the general problem ($GM_\epsilon$) is
 \begin{subequations}\label{ex}
\begin{equation}\label{ex1}
x^{k+1}\in\prox_{c_k}^f(x^k-\frac{1}{c_k}\nabla_xH(x^k,y^k))
\end{equation}
\begin{equation}\label{ex2}
y^{k+1}_i=\frac{\nu}{2}(\|B_ix^{k+1}-c_i
\|_2^2+\epsilon^2)^{\frac{\nu-2}{2}}, ~~i=1, 2, \cdots, m.
\end{equation}
\end{subequations}
Its globally convergence to a critical point can be proved in a similar way as before.

\begin{theorem}\label{main2}
Suppose that $F_{\epsilon,\nu}(x)$ is a KL function with $\inf F_{\epsilon,\nu}>-\infty$ . Let $\{x^k\}_{i\in\mathbb{N}}$ be a sequence generated by \eqref{ex1} and \eqref{ex2} and assume that there exists a constant $\tau$ big enough such that $x^k\in \mathcal{B}(\tau)$, for all $k\geq 0.$
\begin{enumerate}
  \item[(i)] The sequence $\{x^k\}_{i\in\mathbb{N}}$ has finite length, that is,
\begin{equation}
 \sum_{k=1}^\infty \|x^{k+1}-x^k\|_2<\infty.
\end{equation}
  \item[(ii)] The sequence $\{x^k\}_{i\in\mathbb{N}}$ converges to a critical point $x^*$ of $F_{\epsilon,\nu}(x)$.
\end{enumerate}
\end{theorem}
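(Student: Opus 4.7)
The plan is to follow the three-step methodology of the proof of Theorem \ref{main1} (sufficient decrease, subgradient bound for the iterates gap, and KL telescoping), applied now to $F_{\epsilon,\nu}$ and the auxiliary problem $(GAMs)$. The crucial observation is that in $(GAMs)$ the coupled function $H(x,y)$ retains exactly the same form as in $(AMs)$; the generalization $0<\nu\le 1$ enters only through the companion $g(y)$ via the new parameters $\theta,\kappa,\overline{\Lambda}$ in \eqref{para}. Hence Corollary \ref{cor1} and the local Lipschitz estimate on $\nabla_x H$ transfer essentially verbatim, and the initial task reduces to re-deriving the analogs of Lemma \ref{PIS} and Lemma \ref{PLP} for $F_{\epsilon,\nu}$.

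First I would verify the analog of Lemma 2 under the parameter choice \eqref{para}: for each fixed $x$ the minimizer of $y\mapsto \Psi(x,y)$ over $\overline{\Lambda}$ is $y_i^\star(x)=\tfrac{\nu}{2}(\|B_ix-c_i\|_2^2+\epsilon^2)^{(\nu-2)/2}$, that $y^\star(x)\in\overline{\Lambda}$, and that $\Psi(x,y^\star(x))=F_{\epsilon,\nu}(x)$. This guarantees that the PL-IRLS updates \eqref{ex1}--\eqref{ex2} coincide with the alternating steps on $\Psi$ and that $\Psi(x^k,y^k)=F_{\epsilon,\nu}(x^k)$ for every $k$. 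The sufficient decrease then follows as in Lemma \ref{PIS}(i): using $\|y^k\|_\infty\le \nu/(2\epsilon^{2-\nu})$ to bound $L(\tau,y^k)$ and applying Lemma \ref{PSD} with $h(\cdot)=H(\cdot,y^k)$, $\sigma=f$, $t=c_k=\gamma L(\tau,y^k)$ gives $F_{\epsilon,\nu}(x^k)-F_{\epsilon,\nu}(x^{k+1})\ge \rho_1\|x^{k+1}-x^k\|_2^2$ for some $\rho_1>0$.

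For the subgradient bound, after substituting \eqref{ex2} into $\nabla_x H(x^k,y^k)=\nabla s(x^k)+2\sum_i y_i^k B_i^\top(B_ix^k-c_i)$ one recognizes $\nabla_x H(x^k,y^k)=\nabla s(x^k)+\sum_i \nabla[p_i^\nu](x^k)$, which is precisely the gradient of the smooth part of $F_{\epsilon,\nu}$ at $x^k$. The rest of the derivation of $w^k:=\nabla_x H(x^k,y^k)-\nabla_x H(x^{k-1},y^{k-1})+c_{k-1}(x^{k-1}-x^k)\in\partial F_{\epsilon,\nu}(x^k)$ and of the estimate $\|w^k\|_2\le \rho_2\|x^k-x^{k-1}\|_2$ then copies the argument in Lemma \ref{PIS}(iii), provided a locally gradient-Lipschitz bound of the form $\|\nabla p_i^\nu(u)-\nabla p_i^\nu(v)\|_2\le \tilde L_i^\tau\|u-v\|_2$ on $\mathcal{B}(\tau)$ is available.

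The main obstacle is exactly this last bound, i.e., extending Lemma \ref{pLip} from $p_i$ to $p_i^\nu$ when $0<\nu<1$. A direct calculation gives $\nabla p_i^\nu(x)=\nu\, p_i(x)^{\nu-2} B_i^\top(B_ix-c_i)$; the smoothing parameter $\epsilon>0$ forces $p_i(x)\ge \epsilon$ everywhere, while on $\mathcal{B}(\tau)$ one has $p_i(x)\le \|B_i\|\tau+\|c_i\|_2+\epsilon$, so the scalar factor $p_i(x)^{\nu-2}$ stays bounded and Lipschitz on $\mathcal{B}(\tau)$; combined with the affine Lipschitz factor $B_i^\top(B_ix-c_i)$ this yields a $\tau$-dependent constant $\tilde L_i^\tau$. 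This is the single place where the non-smoothness of $t^\nu$ at $t=0$ needs attention, and the presence of the smoothing $\epsilon$ is what rescues the argument. Once this estimate is in hand, the limit-point properties of Lemma \ref{PLP} and the concluding KL telescoping via Lemma \ref{UKL} with $\Omega=w(x^0)$ and $\sigma=F_{\epsilon,\nu}$ transcribe verbatim from Theorem \ref{main1}, yielding $\sum_k \|x^{k+1}-x^k\|_2<\infty$ and the convergence of $\{x^k\}$ to a critical point of $F_{\epsilon,\nu}$.
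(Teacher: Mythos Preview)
Your proposal is correct and follows exactly the approach the paper itself indicates: the paper offers no separate proof of Theorem~\ref{main2}, stating only that ``its globally convergence to a critical point can be proved in a similar way as before,'' i.e., by repeating the argument for Theorem~\ref{main1}. Your write-up in fact goes further than the paper by explicitly identifying the single nontrivial adaptation---replacing the local Lipschitz bound for $\nabla p_i$ (Lemma~\ref{pLip}) by one for $\nabla p_i^\nu$---and your reasoning that $p_i\ge\epsilon$ keeps $p_i^{\nu-2}$ bounded and Lipschitz on $\mathcal{B}(\tau)$ is exactly what is needed.
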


Our method may be extended to solve the following nonconvex and nonsmooth matrix-value functions minimization problem:
\begin{equation}
(MM)\quad \quad \Min f(X)+s(X)+tr[(XX^T)^{\frac{1}{2}}].
\end{equation}
Similarly, we can consider its smoothed approximation
\begin{equation}
(MM_\epsilon)\quad \quad \Min f(X)+s(X)+tr[(XX^T+\epsilon I)^{\frac{1}{2}}]
\end{equation}
and the corresponding auxiliary problem
\begin{equation}
(MAMs)\quad \quad \Min \Psi(X,Y):=f(X)+s(X)+tr[(XX^T+\epsilon I)Y + Y^{-1}]+\delta(Y, \mathcal{K}),
\end{equation}
where $\mathcal{K}$ is some positive-definite matrices cone. Note that $\partial tr(Y^{-1})=-(Y^{-2})^T$. If we fix $X$, then minimizing the objective $\Psi(X,Y)$ with respective to $Y$, we get a minimizer $Y=(XX^T+\epsilon I)^{-\frac{1}{2}}$. All of these observations make us believe that PL-IRLS can be extended to solve the matrix-value functions minimizations above. We leave it as a future work.

Finally, our method can also be extended to minimize objective function with two or more blocks of variables. For illustrating, we present two examples from low-rank and sparse matrices recovery.
\begin{equation}\label{newexample1}
 \Min_{X,Y\in \RR^{n\times n}} \|X\|_*+\|Y\|_1+\|\mathcal{A}(X+Y)-b\|_1
\end{equation}
\begin{equation}\label{newexample2}
 \Min_{U,V\in \RR^{n\times r}} \|\mathcal{A}(UV^T)-b\|_1,
\end{equation}
where $\mathcal{A}:\RR^{n\times n}\rightarrow \RR^m$ is a linear operator, $b\in\RR^m$ is an observed vector. The first example is referred to as sparse and low-rank matrices decomposition from observed data with sparse noise, and it is a convex programming. The second example is referred to as low-rank matrices decomposition from observed data with sparse noise, and it is a nonconvex programming.  To solve problems \eqref{newexample1} and \eqref{newexample2} by PL-IRLS, the main idea behind of our method is as same as before; that consists of two steps:
\begin{enumerate}
  \item Smooth the nondifferentiable and coupled term.
  \item Introduce a new variable to equivalently get an auxiliary problem.
\end{enumerate}
Take \eqref{newexample1} as an example. We first write down its smoothed version:
\begin{equation}
 \Min_{X,Y\in \RR^{n\times n}} \|X\|_*+\|Y\|_1+\sum_{i=1}^m\sqrt{(\mathcal{A}(X+Y)_i-b_i)^2+\epsilon^2}.
\end{equation}
And then we introduce a new vector $z$ and get
\begin{equation}
 \Min_{X,Y\in \RR^{n\times n}, z\in\RR^m} \|X\|_*+\|Y\|_1+\underbrace{\sum_{i=1}^m\left(((\mathcal{A}(X+Y)_i-b_i)^2+\epsilon^2)z_i+\frac{1}{4z_i}
 +\delta(z_i,(0,\frac{1}{2\epsilon}])\right)}_{H(X,Y,z)}.
\end{equation}
Applying PL-IRLS, we suggest the following scheme for solving \eqref{newexample1}.
\begin{subequations}
\begin{equation}\label{xkn}
X^{k+1}\in\prox_{c_k}^{\|\cdot\|_*}(X^k-\frac{1}{c_k}\nabla_x H(X^k,Y^k,z^k))
\end{equation}
\begin{equation}\label{ykn}
Y^{k+1}\in\prox_{d_k}^{\|\cdot\|_1}(Y^k-\frac{1}{d_k}\nabla_y H(X^{k+1},Y^k,z^k))
\end{equation}
\begin{equation}\label{zkn}
z_i^{k+1}\in\frac{1}{2\sqrt{(\mathcal{A}(X^{k+1}+Y^{k+1})_i-b_i)^2+\epsilon^2}}, i=1,\cdots, m.
\end{equation}
\end{subequations}
where $c_k, d_k$ are step parameters. The proximal maps of functions $\|\cdot\|_*$ and $\|\cdot\|_1$ have direct computation formulations. In fact, the proximal map of $\|\cdot\|_1$ is the soft-thresholding operator and the proximal map of $\|\cdot\|_*$  is the singular value thresholding operator \cite{cai2010singular}.  The global convergence to a critical of the objective function of \eqref{newexample1} then can be proved.

\section{Application}
From the convergence analysis before, two conditions are required to check before applying PL-IRLS to solve certain problems: The first one is whether the objective function $F_\epsilon(x)$ or $F_{\epsilon,\nu}(x)$ is a KL function; The second one is whether the proximal map of $f(x)$ can be easily computed.   In what follows, we solve three nonconvex examples appeared in signal/image processing to show how PL-IRLS can be applied to produce globally convergence algorithms. Convex cases are relatively easy.

\subsection{Nonconvex sparse least square problem}
We are interested in solving the following nonconvex unconstrained sparse least square problem
\begin{equation}
 \Min \frac{\lambda}{2}\|A x-b\|_2^2 +\|x\|_\nu^\nu,
\end{equation}
where $0< \nu\leq 1$. To apply PL-IRLS to this problem, we first need to write down its smooth approximation problem
\begin{equation}\label{unconspar2}
 \Min \frac{\lambda}{2}\|A x-b\|_2^2 +\sum_{i=1}^n(x_i^2+\epsilon^2)^{\frac{\nu}{2}}
\end{equation}
and its auxiliary problem
\begin{equation}
 \Min \Psi(x, y)=\underbrace{\frac{\lambda}{2}\|A x-b\|_2^2 +\sum_{i=1}^n(x_i^2+\epsilon^2)y_i}_{H(x,y)}+\underbrace{\sum_{i=1}^m \frac{\kappa}{y_i^\theta}+\delta(y, \overline{\Lambda})}_{g(y)},
\end{equation}
where the parameters are set as that in \eqref{para}.  In this problem, $f(x)$ disappears, so we do not need to concern the computation of proximal maps but need to check whether the objective function in \eqref{unconspar2} is a KL function. To do this, note that the function $\frac{\lambda}{2}\|A x-b\|_2^2$ is polynomial and hence is a KL function \cite{attouch2013convergence}, we only need to check $\sum_{i=1}^n(x_i^2+\epsilon^2)^{\frac{\nu}{2}}$.
\begin{lemma}\label{KLf}
Define $\|x\|_{\nu,\epsilon}:=\sum_{i=1}^n(x_i^2+\epsilon^2)^{\frac{\nu}{2}}$ with $\epsilon >0, 0<\nu\leq 1$. Then  $\|x\|_{\nu,\epsilon}$ is a KL function when $\nu$ is rational.
\end{lemma}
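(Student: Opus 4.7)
The plan is to invoke Lemma \ref{semialge}: it suffices to show that $\|x\|_{\nu,\epsilon}$ is semi-algebraic and that $\dom(\|x\|_{\nu,\epsilon}) = \dom(\partial \|x\|_{\nu,\epsilon})$. The second condition is the easy half: since $\epsilon>0$, the expression $x_i^2+\epsilon^2$ is strictly positive for every $x_i\in\RR$, so each summand $(x_i^2+\epsilon^2)^{\nu/2}$ is continuously differentiable on $\RR$. Hence $\|x\|_{\nu,\epsilon}$ is $C^1$ on all of $\RR^n$, which makes $\partial\|x\|_{\nu,\epsilon}(x) = \{\nabla\|x\|_{\nu,\epsilon}(x)\}$ nonempty everywhere, so $\dom(\partial \|x\|_{\nu,\epsilon}) = \RR^n = \dom(\|x\|_{\nu,\epsilon})$.

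The real content is establishing semi-algebraicity. The paper has already recorded (immediately after Lemma \ref{semialge}) that finite sums of semi-algebraic functions are semi-algebraic, so it is enough to treat one coordinate at a time and prove that the univariate function $t\mapsto (t^2+\epsilon^2)^{\nu/2}$ is semi-algebraic. Here I use the rationality of $\nu$: write $\nu = p/q$ in lowest terms with $p,q\in\mathbb{N}$. Then the graph of this function is
\begin{equation*}
\Gamma = \bigl\{(t,s)\in\RR^2 : s\geq 0 \text{ and } s^{2q} - (t^2+\epsilon^2)^{p} = 0\bigr\},
\end{equation*}
which is cut out by one polynomial equation and one polynomial inequality; this is exactly the shape required by Definition 3, so $\Gamma$ is a semi-algebraic subset of $\RR^2$ and the function is semi-algebraic.

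Assembling these pieces: each summand $x\mapsto (x_i^2+\epsilon^2)^{\nu/2}$ is the composition of the coordinate projection $x\mapsto x_i$ (a polynomial, hence semi-algebraic) with the semi-algebraic univariate function above, so it is semi-algebraic; the finite sum $\|x\|_{\nu,\epsilon} = \sum_{i=1}^n (x_i^2+\epsilon^2)^{\nu/2}$ is therefore semi-algebraic. Combined with the domain remark in the first paragraph, Lemma \ref{semialge} delivers the KL property at every point of $\RR^n$, completing the argument.

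The only genuinely delicate point is the use of rationality of $\nu$ to put the graph into polynomial form via the clearing-denominators trick $s^{2q} = (t^2+\epsilon^2)^p$; for irrational $\nu$ this description is unavailable and the function is not semi-algebraic in general (one would have to appeal to the richer class of o-minimal/analytic-geometric KL functions, which is outside the scope stated in the lemma).
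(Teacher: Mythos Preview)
Your argument is correct and follows essentially the same route as the paper: reduce to a single coordinate, write $\nu$ as a ratio of integers, and clear denominators to express the graph of $t\mapsto(t^2+\epsilon^2)^{\nu/2}$ by a polynomial relation, then invoke Lemma~\ref{semialge}. Your version is in fact slightly more careful than the paper's, since you include the constraint $s\geq 0$ in the description of the graph (without it, the polynomial equation alone can pick up a spurious negative branch when the denominator of $\nu$ is even).
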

\begin{proof}
Let $\nu=\frac{p_1}{p_2}$ where $p_1, p_2$ are positive integers. Since the composition of semi-algebraic functions is also a semi-algebraic function, it suffices to prove that $u\rightarrow (u^2+\epsilon^2)^{\frac{p_1}{p_2}}$ is semi-algebraic. Its graph $\RR^2$ can be written as
$$\{(u, t)\in \RR^2: t=(u^2+\epsilon^2)^{\frac{p_1}{p_2}}\}=\{(u, t)\in \RR^2: t^{p_2}-(u^2+\epsilon^2)^{p_1}=0\},$$
which is obviously a semi-algebraic set by definition. So $u\rightarrow (u^2+\epsilon^2)^{\frac{p_1}{p_2}}$ is a semi-algebraic function. Since $\textrm{dom}\partial \|x\|_{\nu,\epsilon}=\textrm{dom}\|x\|_{\nu,\epsilon}$, by Lemma \ref{semialge} we can conclude that $\|x\|_{\nu,\epsilon}$ is a KL function when $\nu$ is rational.
\end{proof}
We have known that finite sum of KL functions is also a KL function. Therefore, the objective function in \eqref{unconspar2} is a KL function when $\nu$ is rational, and hence PL-IRLS can be safely applied to solve problem \eqref{unconspar2}. Let $Y=\textrm{diag}(y_1,\cdots,y_n)$; then by simple calculation, we get that $\nabla_x H(x,y)=\lambda A^T(Ax-b)+2Yx$. Applying \eqref{ex} to problem \eqref{unconspar2}, we obtain that
 \begin{subequations}\label{aex}
\begin{equation}\label{aex1}
x^{k+1}=x^k-\frac{1}{c_k}(\lambda A^T(Ax^k-b)+2Y^kx^k)
\end{equation}
\begin{equation}\label{aex2}
y^{k+1}_i=\frac{\nu}{2}((x_i^{k+1})^2+\epsilon^2)^{\frac{\nu-2}{2}}, ~~i=1, 2, \cdots, n,
\end{equation}
\end{subequations}
where $Y^k=\textrm{diag}(y_1^k,\cdots,y_n^k)$. The global convergence of $\{x^k\}$ generated by the iterative algorithm above to a critical point of  the objective function in \eqref{unconspar2} can be guaranteed by Theorem \ref{main2}. At the end, we would like to mention a similar iteratively reweighted algorithm for solving \eqref{unconspar2} in \cite{lai2011unconstrained}. That algorithm, which we will call IR algorithm, can be described by two steps

  \begin{enumerate}
    \item[(a)] Obtain  $x^{k+1}$ by solving $\nabla_x H(x,y^k)=\lambda A^T(Ax-b)+2Y^kx=0$.

    \item[(b)] Compute $y^{k+1}$ by utilizing \eqref{aex2}.
  \end{enumerate}

  The authors in \cite{lai2011unconstrained} proved that under certain conditions, the accumulation points of the sequence generated by the IR algorithm can be stationary points of the objective function in \eqref{unconspar2}. The merit of the IR algorithm is that it can be used for sparse recovery. Its potential drawback is the difficulty of solving the linear system of $\lambda A^T(Ax-b)+2Y^kx=0$. In addition, its global convergence needs to be proved.

\subsection{Nonconvex sparse $\ell_1$-norm regression}
We are interested in the following unconstrained sparse $\ell_1$-norm regression problem
\begin{equation}
 \Min \lambda\|x\|_0 +\|Ax-b\|_1.
\end{equation}
To apply PL-IRLS to this problem, we first need to write down its smooth approximation problem
\begin{equation}\label{unconspar}
 \Min \lambda\|x\|_0 +\sum_{i=1}^m\sqrt{(Ax-b)_i^2+\epsilon^2}
\end{equation}
and its auxiliary problem
\begin{equation}
 \Min \Psi(x, y)=\underbrace{\lambda\|x\|_0 }_{ f(x)} +\underbrace{\sum_{i=1}^m((Ax-b)_i^2+\epsilon^2)y_i}_{H(x,y)}+\underbrace{\sum_{i=1}^m\frac{1}{4y_i}+\delta(y, \Lambda)}_{g(y)}.
\end{equation}
Let $Y=\textrm{diag}(y_1,\cdots,y_n)$; then by simple calculation, we get that $\nabla_xH(x,y)=2A^TYAx-2A^TYb$.
Second, $\lambda\|x\|_0$ is a KL function (see \cite{bolte2013proximal}) and $\sum_{i=1}^m\sqrt{(Ax-b)_i^2+\epsilon^2}$ is also KL by Lemma \ref{KLf}, and so is their sum. Third, the proximal map of $\lambda\|x\|_0$ can be easily computed. In fact, when $n=1$, the counting norm is denoted by $|\cdot|_0$ and the authors in \cite{attouch2013convergence} establishes that
\begin{eqnarray}
\prox_c^{\lambda |\cdot|_0}(u)=\left\{\begin{array}{ll}
u &\textrm{if} ~|u|>\sqrt{2\lambda/c}\\
\{0,u\}&\textrm{if} ~|u|=\sqrt{2\lambda/c}\\
0&\textrm{otherwise},
\end{array} \right.
\end{eqnarray}
and for $y\in\RR^n$,
$$\prox_c^{\lambda \|\cdot\|_0}(y)=(\prox_c^{\lambda |\cdot|_0}(y_1),\cdots, \prox_c^{\lambda |\cdot|_0}(y_n))^T.$$
Now, applying PL-IRLS  to  \eqref{unconspar}, we obtain that
 \begin{subequations}
\begin{equation}\label{app1}
x^{k+1}\in \prox_{c_k}^{\lambda \|\cdot\|_0}(x^k-\frac{2}{c_k}A^TY^k(Ax^k-b) )
\end{equation}
\begin{equation}\label{app2}
y^{k+1}_i=\frac{1}{2\sqrt{(Ax^{k+1}-b)_i^2+\epsilon^2}}
, ~~i=1, 2, \cdots, n,\end{equation}
\end{subequations}
where $Y^k=\textrm{diag}(y_1^k,\cdots,y_n^k)$. By Theorem \ref{main1}, the algorithm above is guaranteed to globally converge to a critical point of the objective function in \eqref{unconspar}.

\subsection{Nonconvex low-rank matrix recovery}
In low-rank matrix recovery, one may be interested in the following problem
\begin{equation}\label{rpca11}
\Min \lambda \cdot\textrm{rank}(X)+\|D-X\|_1.
\end{equation}
We suggest PL-IRLS solve it based on the fact that $\lambda \cdot\textrm{rank}(X)$ is a KL function \cite{bolte2013proximal} and the following observation:
\begin{lemma}\label{rank}
Let matrix $Y\in \RR^{m\times n}$ have singular value decomposition $Y=U\Sigma V^T$ with $U\in\RR^{m\times m}, V\in\RR^{n\times n}$ are orthogonal matrices and $\Sigma=[\sigma_{ij}]\in\RR^{m\times n}$ having $\sigma_{ij=0}$ for all $i\neq j$, and $\sigma_{11}\geq \sigma_{22}\geq \cdots\geq \sigma_{kk}>\sigma_{k+1,k+1}=\cdots=\sigma_{q,q}=0$, where $k=\textrm{rank}(Y)$ and $q=\min\{n,m\}$. Then, $U\hat{Z}V^T\in \prox^{rank(\cdot)}_c(Y)$ for each $\hat{Z}$ with $\hat{Z}_{ii}\in\prox^{|\cdot|_{0}}_{c}(\sigma_{ii})$, $i=1,\cdots, q$ and other entries equal to zero.
\end{lemma}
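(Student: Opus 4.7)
The plan is to recognize that both the rank function and the squared Frobenius distance are unitarily invariant spectral objects, so the matrix proximal problem reduces to a decoupled scalar problem on singular values. Writing out the proximal map definition, I need to minimize
\[
\Phi(X) := \textrm{rank}(X) + \frac{c}{2}\|X-Y\|_F^2
\]
over $X \in \RR^{m\times n}$. The observation is that $\textrm{rank}(X) = \|\sigma(X)\|_0$, where $\sigma(X) \in \RR^q$ is the vector of singular values of $X$ (in nonincreasing order), so $\textrm{rank}$ depends on $X$ only through $\sigma(X)$.

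Next, I would apply the von Neumann trace inequality $\langle X, Y\rangle \le \langle \sigma(X), \sigma(Y)\rangle$, which yields the lower bound
\[
\|X - Y\|_F^2 = \|X\|_F^2 - 2\langle X,Y\rangle + \|Y\|_F^2 \ge \|\sigma(X) - \sigma(Y)\|_2^2,
\]
with equality whenever $X$ admits an SVD sharing the singular-vector matrices $U,V$ of $Y$ (with compatibly ordered singular values). Combining this with the spectral identity for the rank gives
\[
\Phi(X) \ge \sum_{i=1}^q \Bigl( |s_i|_0 + \frac{c}{2}(s_i - \sigma_{ii})^2 \Bigr),
\]
where $s_i := \sigma_i(X)$. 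Since the right-hand side is separable, its infimum over $(s_1,\ldots,s_q)$ is attained componentwise by taking each $s_i \in \prox_c^{|\cdot|_0}(\sigma_{ii})$, and this infimum lower-bounds $\Phi(X)$ over all $X$.

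Finally, I would verify that the candidate $X^\star := U \hat{Z} V^T$ specified in the lemma actually attains this lower bound. Because $\hat{Z}$ is diagonal with entries $\hat{Z}_{ii} \in \prox_c^{|\cdot|_0}(\sigma_{ii}) \subset \{0,\sigma_{ii}\} \subset \RR_{\ge 0}$, the matrix $U \hat{Z} V^T$ is already an SVD of $X^\star$ sharing $U,V$ with $Y$, so equality holds in the von Neumann step and we get $\|X^\star - Y\|_F^2 = \|\sigma(X^\star) - \sigma(Y)\|_2^2$. Also $\textrm{rank}(X^\star) = \|\hat{Z}_{11},\ldots,\hat{Z}_{qq}\|_0 = \sum_i |\hat{Z}_{ii}|_0$, so $\Phi(X^\star)$ equals the separable minimum above. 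Hence $X^\star \in \prox_c^{\textrm{rank}(\cdot)}(Y)$.

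The only subtle point, and what I expect to be the main obstacle to state cleanly, is nonuniqueness: the scalar prox $\prox_c^{|\cdot|_0}(\sigma_{ii})$ can be set-valued when $\sigma_{ii} = \sqrt{2/c}$, and the SVD of $Y$ itself is nonunique when singular values coincide. The lemma is phrased as $U\hat{Z}V^T \in \prox_c^{\textrm{rank}(\cdot)}(Y)$ (an inclusion, not an equality), which accommodates this; the argument above produces one element of the prox set for each admissible choice of $\hat Z$, and any element of $\prox_c^{\textrm{rank}(\cdot)}(Y)$ conversely arises this way by extracting its SVD and comparing singular values to $\sigma(Y)$. I would not belabor the converse direction beyond a brief remark, as only the stated inclusion is needed for the algorithm.
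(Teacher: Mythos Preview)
Your proof is correct and takes a genuinely different route from the paper's. The paper first uses unitary invariance to replace $Y$ by $\Sigma$, then takes an arbitrary minimizer $\bar Z$ of $G(Z)=\textrm{rank}(Z)+\tfrac{c}{2}\|Z-\Sigma\|_F^2$, fixes its rank $r$, and invokes the Eckart--Young theorem to conclude that the diagonal truncation $\Sigma_r$ achieves the same value; from this it deduces that the minimum over diagonal $Z$ equals the unrestricted minimum, which separates into scalar $|\cdot|_0$-prox problems. Your argument instead establishes a direct lower bound $\Phi(X)\ge \sum_i\bigl(|s_i|_0+\tfrac{c}{2}(s_i-\sigma_{ii})^2\bigr)$ via the von Neumann trace inequality and then checks that the candidate $U\hat Z V^T$ attains it. The lower-bound-plus-attainment structure is cleaner and avoids the detour through an unknown minimizer; the paper's route, on the other hand, keeps the machinery closer to classical low-rank approximation theory.

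One small remark: for the attainment step you do not actually need the von Neumann equality case (which, strictly speaking, asks for compatibly \emph{ordered} singular values, and the entries $\hat Z_{ii}\in\{0,\sigma_{ii}\}$ need not be monotone at the threshold). It is simpler to note directly that $\|U\hat Z V^T-Y\|_F=\|U(\hat Z-\Sigma)V^T\|_F=\|\hat Z-\Sigma\|_F$ by unitary invariance of the Frobenius norm; this gives the equality you need without any ordering discussion.
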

\begin{proof}
We begin with the definition of $\prox^{rank(\cdot)}_c(Y)$,
\begin{subequations}
\begin{eqnarray}
    \prox^{\textrm{rank}(\cdot)}_c(Y)&=&\textrm{arg}\min_{X}\{rank(X)+\frac{c}{2}\parallel X-Y\parallel_{F}^{2}\} \nonumber \\
    &=&\textrm{arg}\min_{X}\{rank(X)+\frac{c}{2}\parallel X-U\Sigma V^T \parallel_{F}^{2}\} \nonumber\\
    &=&\textrm{arg}\min_{X}\{rank(U^TXV)+\frac{\lambda}{2}\parallel U^TXV-\Sigma \parallel_{F}^{2}\} \nonumber\\ &=&\arg\min_{Z}\{rank(Z)+\frac{c}{2}\parallel Z-\Sigma \parallel_{F}^{2}\}, \nonumber
\end{eqnarray}
\end{subequations}
where $Z=U^TXV$.
Define $G(Z):=rank(Z)+\frac{c}{2}\parallel Z-\Sigma \parallel_{F}^{2}$ and $\prod:=\{Z\in \RR^{m\times n}: Z_{ij}=0, i\neq j\}$. Let $\bar{Z}\in \arg\min_{Z}G(Z)$ with $rank(\bar{Z})=r$.  Then, $$\bar{Z}\in\arg\min_{rank(Z)=r}\|Z-\Sigma\|_F^2.$$
By the Eckart-Young theorem \cite{horn1990matrix}, we have that $\Sigma_r\in\arg\min_{rank(Z)=r}\|Z-\Sigma\|_F^2$ where $\Sigma_r=[\pi_{ij}]$ has $\pi_{11}=\sigma_{11},  \pi_{22}=\sigma_{22},\cdots, \pi_{rr}=\sigma_{rr}$ and other entries equal to zero. It is easy to see that $G(\Sigma_r)=G(\bar{Z})$ and hence $\Sigma_r\in \arg\min_{Z}G(Z)$. Noting $\Sigma_r\in \prod$, we obtain that
$$\min_{Z\in \prod}G(Z)\leq G(\Sigma_r)=\min_ZG(Z).$$
On the other hand, it holds that $\min_ZG(Z)\leq \min_{Z\in \prod}G(Z)$. Therefore, $\min_ZG(Z)= \min_{Z\in \prod}G(Z)$ which implies that
\begin{equation}\label{subsolu}
\arg\min_{Z\in \prod}G(Z)\subseteq\arg\min_{Z}G(Z).
\end{equation}
Let $u=(Z_{11}, Z_{22}, \cdots, Z_{qq})^T$ and $v=(\sigma_{11}, \sigma_{22}, \cdots, \sigma_{qq})^T$. Then, $\arg\min_{Z\in \prod}G(Z)$ can be reduced to
$$\arg\min_{u\in\RR^q}\|u\|_0+\frac{c}{2}\|u-v\|_2^2=\prox^{\|\cdot\|_0}_c(v).$$
Thus, from the relationship \eqref{subsolu}, the conclusion follows.
\end{proof}

Consider the following smoothed approximation of \eqref{rpca11}
\begin{equation}\label{rpca12}
\Min \textrm{rank}(X)+\sum_{i,j}\frac{1}{\lambda}\sqrt{(D_{ij}-X_{ij})^2+\epsilon^2},
\end{equation}
and its auxiliary problem
\begin{equation}\label{rpca13}
\Min \textrm{rank}(X)+\sum_{i,j}\frac{1}{\lambda}[(D_{ij}-X_{ij})^2+\epsilon^2]Y_{ij}+\sum_{i,j}\left(\frac{1}{4Y_{ij}}+\delta(Y_{ij}, (0,\frac{\epsilon}{2}])\right).
\end{equation}
Applying PL-IRLS, we obtain that
 \begin{subequations}
\begin{equation}\label{app3}
X^{k+1}\in \prox_{c_k}^{rank(\cdot)}(X^k-\frac{2}{c_k}\nabla_X H(X^k,Y^k))
\end{equation}
\begin{equation}\label{app4}
Y^{k+1}_{ij}=\frac{1}{2\sqrt{(D_{ij}-X_{ij}^{k+1})^2+\epsilon^2}},
\end{equation}
\end{subequations}
where $(\nabla_X H(X^k,Y^k))_{ij}=\frac{2}{\lambda}Y_{ij}^k(X_{ij}^k-D_{ij})$. Let $Z^k=X^k-\frac{2}{c_k}\nabla_X H(X^k,Y^k)$; then $X^{k+1}\in \prox_{c_k}^{rank(\cdot)}(Z^k)$. Now, the main difficulty is how to compute the proximal map of $rank(\cdot)$ at $Z^k$. Lemma \ref{rank} gives a way to do this, so we omit the details here.

\section*{Acknowledgements}
The authors thank Prof.Wotao Yin (UCLA) for helpful comments. The work of H. Zhang is supported by Graduate School of NUDT under Funding of Innovation B110202, The work of T. Sun is supported by NSF Grants  No.61201328 and NNSF of Hunan Province(13JJ4011).  The work of L. Cheng is supported by NSF Grants No.61271014 and No.61072118, and NNSF of Hunan Province(13JJ2011), and Science Projection of NUDT (JC120201).


\end{document}